\newcommand{\og}{\overline g}
\newcommand{\oX}{\overline X}
\newcommand{\cT}{\mathcal T}
\def\d{{\partial}}
\newcommand{\eps}{\varepsilon}
\newcommand{\cF}{\mathcal F}
\newcommand{\tR}{\widetilde{R}}
\newcommand{\hR}{\widehat{R}}
\def\d{\partial}
\def\f{\frac}
\newcommand{\beq}{\begin{equation}}
\newcommand{\eeq}{\end{equation}}
\newcommand{\Id}{\mathrm{Id}}
\newcommand{\tGamma}{\widetilde{\Gamma}}
\newcommand{\tnabla}{\widetilde{\nabla}}
\newcommand{\oH}{\overline{H}}
\newcommand{\onabla}{\overline{\nabla}}
\newcommand{\otheta}{\overline{\theta}}
\newcommand{\cL}{\mathcal{L}}
\newcommand{\oR}{\overline{R}}
\newcommand{\oGamma}{\overline{\Gamma}}
\newcommand{\od}{\overline{d}}
\newtheorem{theorem}{Theorem}[section]
\newtheorem{proposition}[theorem]{Proposition}
\newtheorem{lemma}[theorem]{Lemma}
\newtheorem{definition}[theorem]{Definition}
\newtheorem{defnot}[theorem]{Definition/Notation}
\newtheorem{remark}[theorem]{Remark}
\numberwithin{equation}{section}
\begin{document}

\title{Riemannian F-manifolds, bi-flat F-manifolds, and flat pencils of metrics}

\author{Alessandro Arsie}
\address{A.~Arsie:\newline Department of Mathematics and Statistics, The University of Toledo,\newline 2801W. Bancroft St., 43606 Toledo, OH, USA}
\email{alessandro.arsie@utoledo.edu}

\author{Alexandr Buryak}
\address{A. Buryak:\newline 
Faculty of Mathematics, National Research University Higher School of Economics, \newline
6 Usacheva str., Moscow, 119048, Russian Federation;\smallskip\newline 
Center for Advanced Studies, Skolkovo Institute of Science and Technology, \newline
1 Nobel str., Moscow, 143026, Russian Federation; \smallskip\newline
Novosibirsk State University,\newline 
1 Pirogova str., Novosibirsk, 630090, Russian Federation}
\email{aburyak@hse.ru}

\author{Paolo Lorenzoni}
\address{P.~Lorenzoni:\newline Dipartimento di Matematica e Applicazioni, Universit\`a di Milano-Bicocca, \newline
Via Roberto Cozzi 53, I-20125 Milano, Italy and INFN sezione di Milano-Bicocca}
\email{paolo.lorenzoni@unimib.it}

\author{Paolo Rossi}
\address{P.~Rossi:\newline Dipartimento di Matematica ``Tullio Levi-Civita'', Universit\`a degli Studi di Padova,\newline
Via Trieste 63, 35121 Padova, Italy}
\email{paolo.rossi@math.unipd.it}

\begin{abstract}
In this paper we study relations between various natural structures on F-manifolds. In particular, given an arbitrary Riemannian F-manifold we present a construction of a canonical flat F-manifold associated to it. We also describe a construction of a canonical homogeneous Riemannian F-manifold associated to an arbitrary exact homogeneous flat pencil of metrics satisfying a certain non-degeneracy assumption. In the last part of the paper we construct Legendre transformations for Riemannian F-manifolds.
\end{abstract}

\date{\today}

\maketitle

\tableofcontents

\section*{Introduction}
Since its beginnings, the field of integrable systems has interacted deeply with the study of differential geometric structures. A remarkable case of this interaction is provided by the notion of a {\em semi-Hamiltonian system of hydrodynamic type} introduced by Tsarev in \cite{ts1,ts2}.  These systems form a very wide class of integrable quasilinear system of PDEs of the form
\beq\label{shs}
u^i_t=v^i(u)u^i_x,\qquad i=1,\ldots,n.
\eeq
The integrability is controlled by a set of $n(n-1)$ functions defined by the characteristic velocities of the system
\beq\label{sym}
\Gamma^i_{ij}=\f{\d_j v^i}{v^j-v^i},\qquad i\ne j,
\eeq
satisfying suitable conditions, called semi-Hamiltonian conditions \cite{ts1,ts2} or richness conditions \cite{serre1}. As the notation suggests, the functions $\Gamma^i_{ij}$ can be identified with (a part of) the Christoffel symbols of a connection $\nabla$. A torsionless connection $\nabla$ can be reconstructed completely starting from $\Gamma^i_{ij}$ in essentially two different ways. The first one leads naturally to the notion of a Hamiltonian system of hydrodynamic type, while the second one leads to the notion of an F-manifold with compatible connection. 

 In the first case, starting from $\Gamma^i_{ij}$ one constructs a diagonal metric $g$ satisfying the conditions
\beq\label{metric}
\d_j\ln{\sqrt{g_{ii}}}=\Gamma^i_{ij},\qquad j\ne i,
\eeq
and all the remaining Christoffel symbols are uniquely defined through Levi-Civita's classical formula ($\nabla$ is the Levi-Civita connection of the metric $g$). However, as it is easy to check, the general solution of \eqref{metric} depends on $n$ arbitary functions of a single variable: if $g_{ii}$ is a solution then $\varphi_i(u^i)g_{ii}$ is still a solution. 

Connections defined this way were introduced by Dubrovin and Novikov in \cite{DN}. We call them Hamiltonian connections since they are related to the Hamiltonian formalism. Indeed, when $\nabla$ is flat, the differential operator associated with the diagonal contravariant  metric $g$
\beq\label{localPoisson}
P^{ij}:=g^{ii}\delta^j_i\d_x-g^{il}\Gamma^j_{lk}u^k_x
\eeq  
defines a local Hamiltonian operator for the flow \eqref{shs}. In this case we call \eqref{shs} a Hamiltonian system of hydrodynamic type. Conversely, given a flat non-degenerate pseudo-Riemannian metric $g_{ij}$, one can associate to it a local Poisson operator $P^{ij}$ as in \eqref{localPoisson}. If the metrics defined by the solutions of system \eqref{metric} are not flat, it is still possible to introduce a class of Hamiltonian operators of hydrodynamic type. The counterpart to the non-flatness of the metric is the non-locality of the associated Hamiltonian operator (see \cite{Fer91} for details). Integrability of the corresponding system is realized via the existence of sufficiently many functionals $F_k[u]$, $k=1, 2, \dots$, that Poisson commute with $H[u]$ (the Poisson bracket is induced via \eqref{localPoisson} and is called a Poisson bracket of hydrodynamic type).  In some very important cases, the systems of hydrodynamic type \eqref{shs} are not only Hamiltonian, but bi-Hamiltonian. 

If a system of hydrodynamic type~\eqref{shs} is bi-Hamiltonian with respect to two Poisson brackets of hydrodynamic type induced by two local Hamiltonian  operators, then  the two corresponding flat contravariant metrics  form a {\em flat pencil of metrics} (see \cite{du98}). 

Another way  to reconstruct a torsionless affine connection $\nabla$ starting from $\Gamma^i_{ij}$ was developed in \cite{LP}. Such connection and the product obtained identifying  the Riemann invariants with the canonical coordinates lead naturally to introduce a class of F-manifolds, called F-manifolds with compatible connection \cite{LPR}. In the flat case these manifolds previously appeared in the literature under the name of Dubrovin manifolds \cite{Get04} and 
    F-manifolds with compatible flat structure \cite{manin}. Following \cite{LPR} we will call them flat F-manifolds.  

For a special class of integrable systems of hydrodynamic type the Hamiltonian connections and the natural connection coincide. They are called Egorov systems
 of hydrodynamic type~\cite{PT} since the metrics satisfying system \eqref{metric} are potential for a suitable choice of the Riemann invariants (depending
  on the chosen solution). Flat Egorov systems of hydrodynamic type were studied by Dubrovin in \cite{du90} and are called \emph{strongly integrable systems}.
   In a sense,~\cite{du90} can be seen as the dawn of what later would be the full-fledged theory of Dubrovin-Frobenius manifolds (known in the literature until recently simply as Frobenius manifolds). Indeed, in \cite{du93} Egorov metrics appear as one of cornerstones of the vast landscape of Dubrovin-Frobenius manifolds. Dubrovin showed in \cite{du98} that starting from a flat pencil of metrics that satisfies three properties (exactness, homogeneity, and Egorov property) one can recover a Dubrovin-Frobenius manifold and coversely any Dubrovin-Frobenius manifold defines a  flat pencil of metrics satisfying these properties. For instance, applying this construction to the Saito flat pencil of metrics  associated with a Coxeter group
  \cite{sa79,sys80} one gets the polynomial Dubrovin-Frobenius manifold structure on the orbit space of the group \cite{du99}.
   
    Many of the constructions appearing in the theory of Dubrovin-Frobenius manifolds have been generalized to the non-Egorov set-up, where Dubrovin-Frobenius manifolds are replaced by flat and bi-flat F-manifolds (in the conformal case). We refer to the papers \cite{AL,Limrn,KMS,ALmulti,KM} for relations with Painlev\'e trascendents, to the papers
 \cite{ALcomplex,KMS,KMSh} for relations with reflection groups, to the papers \cite{ABLR1,BB19} for relations with F-cohomological field theories, and to \cite{ALMiura,ABLR2} for relations with integrable systems.
 In particular, the results of \cite{ABLR1} combined with the results of \cite{BR18}  allow one to construct a generalization of double
 ramification hierarchy \cite{Bur15,BR16} for any semisimple flat F-manifold. The dispersionless limit of this hierarchy is the principal hierarchy associated
  with the given flat F-manifold. More general (but only first order and second order) dispersive deformations of the principal hierarchy have been considered in \cite{ALMiura}.

In \cite{AL,Limrn} the construction of bi-flat F-manifolds was based on an (augmented) Darboux-Egorov system 
and the associated linear system of PDEs for the Lam\'e coefficients. The diagonal metric defined by the square of the Lam\'e coefficients
   was interpreted in \cite{ABLR1} as one of the data defining a semisimple Riemannian F-manifold (with Killing unit vector field).
   
This observation is the starting point of the present paper. The paper is divided into two parts.   In the first part we investigate the relations between (homogeneous) Riemannian F-manifolds and flat (Section \ref{sect1}) and bi-flat F-manifolds (Section \ref{sect2}) without semisimplicity assumption. This generalization is not straightforward and requires using explicitly the Hertling-Manin condition for F-manifolds \cite{HM99}.  

In the second part of the paper we study the system 
\begin{align*}
\d_k\beta_{ij}=&\beta_{ik}\beta_{kj},         &&  i\ne j\ne k\ne i,\\
\sum_{l=1}^n\d_l\beta_{ij}=&0,              &&  i\ne j,\\
\sum_{l=1}^n u^l\d_l\beta_{ij}=&-\beta_{ij},&&  i\ne j,
\end{align*}  
for functions $\beta_{ij}$, $i\ne j$, subject to the additional constraints
\begin{align*}
\sum_{k\ne i,j}[(u^j-u^k)(\Delta\beta)_{ik}\beta_{jk}+(u^k-u^i)(\Delta\beta)_{jk}\beta_{ik}]=&(\Delta\beta)_{ij},&&i\ne j,\\
\sum_{k\ne i,j}[u^i(u^j-u^k)(\Delta\beta)_{ik}\beta_{jk}-u^j(u^i-u^k)(\Delta\beta)_{jk}\beta_{ik})]
=&\f{1}{2}(u^i+u^j)(\Delta\beta)_{ij},&&i\ne j,
\end{align*}
where $(\Delta\beta)_{ij}:=\beta_{ij}-\beta_{ji}$. In the Egorov case, i.e. when $(\Delta\beta)_{ij}=0$, the additonal constraints are automatically satisfied and the above system reduces to the system studied by Dubrovin in the theory of semisimple Dubrovin-Frobenius manifolds \cite{du93}. In the non-Egorov case, the first constraint selects semisimple flat Riemannian F-manifolds. This case is of great importance in the theory of integrable systems because the flat metric defines a local Hamiltonian structure for the associated integrable hierarchies. 
  The existence of a second compatible local Hamiltonian structure for these hierarchies requires that also the second constraint is satisfied. As an example, we focus on the three-dimensional case. It turns out that for any solution of the system above one can construct three different homogeneous Riemannian F-manifolds related by Legendre transformations. These transformations were first considered by Dubrovin \cite{du93} in the case of Dubrovin-Frobenius manifolds and generalized later by Stedman and Strachan~\cite{StSt} to a more general framework. The definition given in this last paper can be promptly adapted to F-manifolds with compatible connection and in particular to flat F-manifolds, and we discuss it in Section~\ref{section:Legendre}.

\noindent{\bf Acknowledgements}. The work of A.~B. is supported by the Mathematical Center in Akademgorodok under agreement No. 075-15-2019-1675 with the Ministry of Science and Higher Education of the Russian Federation. P.~L. is supported by funds of H2020-MSCA-RISE-2017 Project No. 778010 IPaDEGAN.

\section{Riemannian F-manifolds and flat F-manifolds}\label{sect1}

Throughout the paper we will use the Einstein summation convention for repeated upper and lower indices, unless it is explicitly stated that such indices are fixed. Given a smooth manifold $M$ we denote by $C^\infty_M$ its sheaf of smooth functions and by $\cT_M$ and $\Omega^1_M$ its tangent and cotangent sheaves. Moreover, given one of these sheaves $\cF$, for brevity and at the cost of a slight abuse of notation, we will write $\sigma \in \cF$ to mean $\sigma\in \cF(U)$ for any (or some, depending on the context) open set $U\subset M$. In case several such sections appear in the same formula, the set $U$ is assumed to be the same for all of them.
\newline
\newline
F-manifolds have been introduced by Hertling and Manin in \cite{HM99}.
\begin{definition}\label{defFmani}
An \emph{F-manifold} is a triple $(M,\circ,e)$, where $M$ is a manifold, $\circ$ is a commutative associative $C^{\infty}_M$-bilinear product $\circ\colon\cT_M \times \cT_M \rightarrow \cT_M$ on the $C^{\infty}_M$-module $\cT_M$ of (local) vector fields, satisfying the following identity:
\begin{align}
&[X\circ Y,W\circ Z]-[X\circ Y, Z]\circ W-[X\circ Y, W]\circ Z-X\circ [Y, Z \circ W]+X\circ [Y, Z]\circ W \label{HMeq1free}\\
&+X\circ [Y, W]\circ Z-Y\circ [X,Z\circ W]+Y\circ [X,Z]\circ W+Y\circ [X, W]\circ Z=0,\notag
\end{align}
for all local vector fields $X,Y,W, Z$, where $[X,Y]$ is the Lie bracket, and $e$ is a distinguished vector field on $M$ such that $e\circ X=X$ for all local vector fields $X$. The requirement of having a unit vector field can be dropped, and in this case a pair $(M,\circ)$ will be called an \emph{F-manifold without unit}. 
\end{definition}

\begin{remark}
Condition~\eqref{HMeq1free} can be written in a more compact way as
\[
\cL_{X\circ Y}(\circ)=X\circ\cL_Y(\circ)+Y\circ\cL_X(\circ),\quad X,Y\in\cT_M,
\]
where $\cL_X$ denotes the Lie derivative.
\end{remark}

If we express $\circ$ through a $(1,2)$ tensor $c$ with components $c^i_{jk}$ with respect to some local coordinate system on $M$, then condition \eqref{HMeq1free} reads:
\begin{equation}\label{HMeq1}
c^q_{jl}\partial_qc^p_{sk}-c^q_{sk}\partial_qc^p_{jl}=c^p_{jq}\partial_l c^q_{sk}-c^p_{qk}\partial_s c^q_{jl}+c^p_{lq}\partial_j c^q_{sk}-c^p_{qs}\partial_kc^q_{jl}.
\end{equation}

Let us recall the following definition/notation.
\begin{defnot}
Consider a manifold $M$ equipped with a commutative associative product $\circ$ in the tangent bundle. 
\begin{enumerate}
\item[1.] The structure constants of the multiplication $\circ$ in some coordinate system will be denoted by $c^i_{jk}$.
\item[2.] A (pseudo-)Riemannian metric $g$ on $M$ is called \emph{invariant}, or \emph{compatible} with the product $\circ$, if 
\begin{gather}\label{eqgcompatible}
g(X\circ Y, Z)=g(X, Y\circ Z), \quad X, Y, Z\in \cT_M.
\end{gather}
In coordinates this condition reads $g_{iq}c^q_{lp}=g_{lq}c^q_{ip}$ or, equivalently, $g^{iq}c^l_{qp}=g^{lq}c^i_{qp}$. 
\item[3.] A torsionless connection $\nabla$ on $M$ is said to be \emph{compatible} with the product $\circ$, if $(\nabla_X c)(Y,Z)$ is symmetric in all of its entries. This is equivalent to the property that the Riemann tensor of the connection $\nabla^{(\lambda)}$ given by
$$
\nabla^{(\lambda)}_X Y:=\nabla_X Y+\lambda X\circ Y,\quad X,Y\in\cT_M,
$$
doesn't depend on $\lambda$. By a result of Hertling \cite[Theorem~2.14]{Hert02} this implies condition~\eqref{HMeq1free}, and thus our manifold~$M$ is an F-manifold in this case.
\item[4.] If our manifold is equipped with a (pseudo-)Riemannian metric $g$ and a unit vector field for the product $\circ$, then the one-form $\theta$ on $M$ given by $\theta(X):=g(e,X)$, $X\in\cT_M$, is called the \emph{counit}.
\item[5.] The derivative of a function $f$ along a vector field $X$ on $M$ will be denoted by $X(f)$.
\end{enumerate}
\end{defnot}

\begin{definition}
A \emph{(pseudo-)Riemannian F-manifold} is the datum of an F-manifold $(M,\circ,e)$ equipped with an invariant (pseudo-)Riemannian metric $g$ such that
\begin{equation}\label{eqRcRcRc=0}
R(Y,Z)(X\circ W)+R(X,Y)(Z\circ W)+R(Z,X)(Y\circ W)=0,\quad X,Y,Z,W\in\cT_M,
\end{equation}
where $R$ is the Riemann tensor of $g$. If, additionally, the condition
\begin{equation}\label{eqKillingunit}
\cL_e g=0
\end{equation}
is satisfied, then the manifold is called a \emph{(pseudo-)Riemannian F-manifold with Killing unit vector field}.
\end{definition}

In coordinates condition \eqref{eqRcRcRc=0} reads
\begin{equation}\label{eqRcRcRccoo=0}
R^j_{skl}c^s_{mi}+R^j_{smk}c^s_{li}+R^j_{slm}c^s_{ki}=0.
\end{equation}

\begin{remark}
In the literature condition \eqref{eqRcRcRc=0} is usually replaced by (see, e.g.,~\cite[Definition~17]{LPR} and \cite{DS11})
\begin{equation}
\label{eqRcRcRc=0bis}
Z\circ R(W,Y)(X)+W\circ R(Y,Z)(X)+Y\circ R(Z,W)(X)=0.
\end{equation}
In two important cases the two conditions are equivalent:
\begin{itemize}
\item if $R$ is the Riemann tensor of a torsionless connection compatible with
 the product (see \cite[Remark 18]{LPR});
\item if $R$ is the Riemann tensor of the Levi-Civita connection
 of an invariant metric. Indeed:
 \begin{eqnarray*}
&&g(R(W,Y)(X)\circ Z+R(Y,Z)(X)\circ W+R(Z,W)(X)\circ Y,\Lambda)=\\
&=&g(R(W,Y)(X), Z\circ \Lambda)+g(R(Y,Z)(X), W\circ \Lambda)+g(R(Z,W)(X),Y\circ \Lambda)=\\
&=&-g(R(W,Y)(Z\circ \Lambda)+R(Y,Z)(W\circ \Lambda)+R(Z,W)(Y\circ \Lambda), X).
 \end{eqnarray*}
\end{itemize}
\end{remark}

\begin{definition}\label{FlatFmanifolddefi}
A \emph{flat F-manifold} (or an \emph{F-manifold with compatible flat structure}) is a manifold $M$ equipped with a commutative associative product $\circ$ in the tangent bundle, a unit vector field $e$, and a torsionless connection $\nabla$ such that
\begin{enumerate}
\item $\nabla e=0$,
\item $\nabla$ is compatible with the product,
\item $\nabla$ is flat.
\end{enumerate}
As we already mentioned, the second condition implies that our manifold is indeed an F-manifold.
\end{definition}

\begin{remark}
Let us observe that in general a flat F-manifold is not a Riemannian F-manifold that happens to possess a flat metric (and hence a flat connection). The reason being that it's possible that none of the metrics $g$ compatible with $\nabla$ (i.e. satisfying the condition $\nabla g=0$) is compatible with the product in the sense of \eqref{eqgcompatible}.
\end{remark}

\subsection{From Riemannian F-manifolds to flat F-manifolds}

\begin{theorem}\label{thm1}
Let $(M,\circ,g,e)$ be a Riemannian F-manifold with Killing unit vector field.
\begin{itemize}
\item[1.] There is a unique torsionless connection $\nabla$ on $M$ satisfying the condition
\begin{equation}\label{nablafromg}
(\nabla_X g)(Y,Z)=\frac{1}{2}d\theta(X\circ Y, Z)+\frac{1}{2}d\theta(X\circ Z, Y),\quad X,Y,Z\in\cT_M.
\end{equation}
This connection is given by
\begin{gather}\label{eq:construction of nabla}
\nabla_X Y=\tnabla_X Y-\frac{1}{2}\left(\iota_{X\circ Y}d\theta\right)^\sharp,\quad X,Y\in\cT_M,
\end{gather}
where $\tnabla$ is the Levi-Civita connection associated to $g$, $\iota_X$ is the operator of contraction with the vector field $X$, and $\sharp\colon\Omega^1_M\to\cT_M$ is the isomorphism between local one-forms and local vector fields induced by the cometric $g^{-1}$. 

\item[2.] The tuple $(M,\circ,\nabla,e)$ defines a flat F-manifold.
\end{itemize}
\end{theorem}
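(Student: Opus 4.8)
The plan is to prove Part~1 as a formal statement about connections and Part~2 by verifying separately the three conditions of Definition~\ref{FlatFmanifolddefi}, each tied to one hypothesis: \eqref{eqKillingunit} will give $\nabla e=0$, the F-manifold axiom \eqref{HMeq1} will give compatibility of $\nabla$ with $\circ$, and \eqref{eqRcRcRc=0} will give flatness. For Part~1 I would check directly that \eqref{eq:construction of nabla} solves \eqref{nablafromg}: writing $\nabla=\tnabla+A$ with $A(X,Y)=-\tfrac12(\iota_{X\circ Y}d\theta)^\sharp$ and using $\tnabla g=0$ one finds $(\nabla_Xg)(Y,Z)=-g(A(X,Y),Z)-g(A(X,Z),Y)=\tfrac12 d\theta(X\circ Y,Z)+\tfrac12 d\theta(X\circ Z,Y)$, which is the right-hand side of \eqref{nablafromg}; the tensor $A$ is symmetric in $X,Y$ because $\circ$ is commutative, so $\nabla$ is torsionless. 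Uniqueness is the standard Koszul argument: the difference $B$ of two torsionless connections satisfying \eqref{nablafromg} is a symmetric $(1,2)$-tensor with $g(B(X,Y),Z)+g(B(X,Z),Y)=0$, and cyclic permutation of $X,Y,Z$ forces $B=0$. No other hypothesis is used here.

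For Part~2 the first step is to exploit \eqref{eqKillingunit}. Since $\theta=g(e,\cdot)$ and $\tnabla$ is metric, $d\theta(X,Y)=g(\tnabla_Xe,Y)-g(\tnabla_Ye,X)$, and the Killing equation makes this equal to $2\,g(\tnabla_Xe,Y)$, i.e. $(\iota_Vd\theta)^\sharp=2\,\tnabla_Ve$ for every vector field $V$. Hence \eqref{eq:construction of nabla} becomes
\[
\nabla_XY=\tnabla_XY-\tnabla_{X\circ Y}e,\qquad X,Y\in\cT_M,
\]
and taking $Y=e$ (so $X\circ e=X$) gives $\nabla e=0$, the first condition of Definition~\ref{FlatFmanifolddefi}.

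For the second condition I would use the characterization of compatibility recalled above: $\nabla$ is compatible with $\circ$ if and only if $(\nabla_Xc)(Y,Z)$ is totally symmetric, equivalently the Riemann tensor of $\nabla^{(\lambda)}$ is independent of $\lambda$. Commutativity of $\circ$ already gives symmetry in $Y,Z$. Substituting $\nabla=\tnabla+A$ and discarding the $A$-terms that depend only on $X\circ Y$ and on $X\circ Y\circ Z$ (hence are symmetric in $X,Y$), symmetry under $X\leftrightarrow Y$ reduces to
\[
(\tnabla_Xc)(Y,Z)-(\tnabla_Yc)(X,Z)=X\circ\tnabla_{Y\circ Z}e-Y\circ\tnabla_{X\circ Z}e .
\]
Pairing with $g(\,\cdot\,,W)$ and using invariance of $g$, the right-hand side becomes $g(\tnabla_{Y\circ Z}e,X\circ W)-g(\tnabla_{X\circ Z}e,Y\circ W)$ and the left-hand side an antisymmetrization of the fully symmetric $(0,4)$-tensor $\tnabla\hc$, where $\hc(X,Y,Z):=g(X\circ Y,Z)$; one then checks that this relation is exactly the Hertling--Manin identity \eqref{HMeq1} transcribed from $\partial$ to $\tnabla$, the resulting Christoffel terms being absorbed using invariance of $g$ and \eqref{eqKillingunit}.

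For the third condition, once compatibility is known the above characterization makes the Riemann tensor of $\nabla^{(\lambda)}$ independent of $\lambda$ — its $\lambda^2$-term $X\circ(Y\circ Z)-Y\circ(X\circ Z)$ vanishes by commutativity and associativity of $\circ$ — so it coincides with the Riemann tensor $R^\nabla$ of $\nabla$, and it suffices to show $R^\nabla=0$. Computing the curvature of $\nabla=\tnabla+A$ directly, the term $(\tnabla_XA)(Y,Z)-(\tnabla_YA)(X,Z)$ and the quadratic term $A(X,A(Y,Z))-A(Y,A(X,Z))$ combine — using again the identity above — to leave
\[
R^\nabla(X,Y)Z=R(X,Y)Z-(\tnabla^2e)(X,Y\circ Z)+(\tnabla^2e)(Y,X\circ Z),
\]
where $R$ is the Riemann tensor of $g$ and $\tnabla^2e$ the second covariant derivative of $e$. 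Kostant's identity for the Killing field $e$ rewrites $\tnabla^2e$ through $R$ contracted with $e$ in one slot, and \eqref{eqRcRcRc=0} applied with one of its free arguments specialized to $e$ then collapses the right-hand side to $0$. Thus $(M,\circ,\nabla,e)$ satisfies all three conditions and is a flat F-manifold. The step I expect to be the real obstacle is the one in the second condition: identifying the reduced identity with the Hertling--Manin axiom is the only place where the F-manifold structure is genuinely consumed, and it requires careful bookkeeping of the metric terms produced by passing to the Levi-Civita connection; the flatness computation, though longer, is essentially mechanical once that identity and Kostant's formula are in hand.
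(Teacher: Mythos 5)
Your overall architecture matches the paper's (Part~1 by direct verification plus a Koszul-type uniqueness argument; Part~2 by checking the three axioms separately, with $\nabla e=0$ coming from the Killing condition, compatibility from Hertling--Manin, and flatness from \eqref{eqRcRcRc=0}), but two of your three steps for Part~2 take genuinely different and cleaner routes. For $\nabla e=0$, your observation that the Killing equation gives $(\iota_V d\theta)^\sharp=2\tnabla_V e$, hence $\nabla_XY=\tnabla_XY-\tnabla_{X\circ Y}e$ and $\nabla_Xe=\tnabla_Xe-\tnabla_Xe=0$, is a nice shortcut compared with the paper's manipulation of \eqref{eqaux22}--\eqref{eqaux33}. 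For flatness, your route is substantially different: you compute $R^\nabla(X,Y)Z=R(X,Y)Z-(\tnabla^2e)(X,Y\circ Z)+(\tnabla^2e)(Y,X\circ Z)$ (the cross terms do cancel once compatibility is known, exactly as you say), invoke the Kostant identity $(\tnabla^2e)(X,V)=R(X,e)V$ for the Killing field $e$, and then specialize \eqref{eqRcRcRc=0} with one argument equal to $e$; this kills $R^\nabla$ in one line. I checked the signs and this works with the standard curvature convention. The paper instead proves Lemma~\ref{flatlemma} and Proposition~\ref{proposition:R and tR identity} and then verifies $\hR(e,X)=0$ by a coordinate computation using $\cL_eg=\cL_e\theta=\cL_ec=0$; your argument is shorter and makes transparent why both hypotheses \eqref{eqKillingunit} and \eqref{eqRcRcRc=0} enter, at the price of importing the Killing-field identity.

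The one place where your proposal is not yet a proof is the compatibility step, and you have correctly diagnosed it as the crux. Your reduction to the identity $(\tnabla_Xc)(Y,Z)-(\tnabla_Yc)(X,Z)=X\circ\tnabla_{Y\circ Z}e-Y\circ\tnabla_{X\circ Z}e$ is correct (the terms $A(X,Y\circ Z)$ and $A(X,Y)\circ Z$ are indeed symmetric in $X,Y$ and drop out), and the claim that this identity follows from \eqref{HMeq1} after contracting with $\theta$, using $\theta_pc^p_{jq}=g_{jq}$, integrating by parts, and absorbing the Christoffel and $\partial\theta$ terms via invariance of $g$ and the Killing condition is true --- but ``one then checks'' is doing all the work here. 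This verification is precisely the paper's longest computation (the cancellation $C+D=0$), and it is not mechanical: the Hertling--Manin identity must be applied to two specific antisymmetric combinations after a rearrangement, and several integrations by parts are needed before the $\tnabla e$ terms emerge. Since your flatness argument also consumes this identity (to cancel the quadratic terms in $A$), the gap propagates. To complete the proof you would need to actually carry out this contraction of \eqref{HMeq1} with $\theta_p g^{is}$; everything else in your proposal is in order.
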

\begin{proof}
For Part 1 the fact that the connection $\nabla$ given by~\eqref{eq:construction of nabla} satisfies~\eqref{nablafromg} is proved by a straightforward computation, while for the uniqueness of the connection see \cite[proof of Theorem~1.13]{ABLR1}.

Let us prove Part 2. We have to check the three properties from Definition~\ref{FlatFmanifolddefi}.


Let us prove that 
\[
\nabla e=0.
\]
Using \eqref{eq:construction of nabla} it is easy to see that 
\begin{equation}\label{eqaux22}
g(\nabla_X e, Z)=g(\tnabla_X e, Z)-\frac{1}{2}d\theta(X,Z).
\end{equation}
On the other hand,
\begin{equation}\label{eqaux11}
d\theta(X,Z)=X(g(e,Z))-Z(g(e,X))-g(e,[X,Z]),
\end{equation}
and using 
\begin{align*}
0=&(\tnabla_X g)(e,Z)=X(g(e,Z))-g(\tnabla_X e, Z)-g(e,\tnabla_X Z),\\
0=&(\tnabla_Z g)(e,X)=Z(g(e,X))-g(\tnabla_Z e, X)-g(e,\tnabla_Z X),
\end{align*}
and the fact that $[X,Z]=\tnabla_X Z-\tnabla_Z X$ in \eqref{eqaux11}, it is easy to see that equation \eqref{eqaux22} becomes 
\begin{equation}\label{eqaux33}
g(\nabla_X e, Z)=\frac{1}{2}g(\tnabla_X e, Z)+\frac{1}{2}g(\tnabla_Z e, X).
\end{equation}
Let us show that the right-hand side of \eqref{eqaux33} vanishes identically. Indeed, from $(\cL_e g)(X,Z)=0$ we have $e(g(X,Z))=g([e, X], Z)+g(X, [e, Z])$. On the other hand, from $(\tnabla_e g)(X,Z)=0$ we have also that $e(g(X,Z))=g(\tnabla_e X,Z)+g(X,\tnabla_e Z)$ and substituting this in $0=(\cL_e g)(X,Z)$ we get that the right-hand side of \eqref{eqaux33} vanishes identically. Since $g$ is non-degenerate and $Z$ is arbitrary, we obtain $\nabla_X e=0$ for any local vector field $X$, as required.

Let us now prove that $\nabla$ is compatible with the product~$\circ$, i.e. $(\nabla_X c)(Y,Z)=(\nabla_Y c)(X,Z)$, or, equivalently, $\nabla_k c^i_{lj}=\nabla_l c^i_{kj}$. We proceed as follows:
\begin{align}
\nabla_k c^i_{lj}-\nabla_l c^i_{kj}=&\partial_k c^i_{lj}-\partial_l c^i_{kj}+\Gamma^i_{km}c^m_{lj}-\Gamma^m_{kl}c^i_{mj}-\Gamma^m_{kj}c^i_{ml}-\Gamma^i_{lm}c^m_{kj}+\Gamma^m_{lk}c^i_{mj}+\Gamma^m_{lj}c^i_{mk},\notag\\
=&\partial_k c^i_{lj}-\partial_l c^i_{kj}+\Gamma^i_{km}c^m_{lj}-\Gamma^m_{kj}c^i_{ml}-\Gamma^i_{lm}c^m_{kj}+\Gamma^m_{lj}c^i_{mk},\label{eq:RHS of nabla minus nabla}
\end{align}
where we have indicated with $\Gamma^i_{jk}$ the Christoffel symbols of the connection $\nabla$. We have
\[
{\Gamma}^i_{kl}=\tGamma^i_{kl}-\frac{1}{2}g^{if}c^q_{kl}d\theta_{qf},
\]
where $\theta_i=g_{il}e^l$, $\tGamma^i_{kl}$ are the Christoffel symbols of the Levi-Civita connection constructed from~$g$, and $d\theta_{qf}=\partial_q \theta_f-\partial_f\theta_q$. In the expression~\eqref{eq:RHS of nabla minus nabla} it is convenient to treat separately the contributions coming from the Levi-Civita connection of $g$, denote them by $A$, and those coming from the additional terms containing the counit, denote them by $B$:
\begin{align*}
A=&\partial_k c^i_{lj}-\partial_l c^i_{kj}+\frac{1}{2}g^{iq}(\partial_k g_{qm}+\underbrace{\partial_mg_{qk}}_{***}-\partial_q g_{km})c^m_{lj}-\frac{1}{2}g^{mq}(\partial_k g_{qj}+\partial_j g_{qk}-\underbrace{\partial_q g_{kj}}_{**})c^i_{ml}\\
&-\frac{1}{2}g^{iq}(\partial_l g_{mq}+\underbrace{\partial_m g_{lq}}_{*}-\partial_q g_{lm})c^m_{kj}+\frac{1}{2}g^{mq}(\partial_l g_{qj}+\partial_j g_{ql}-\underbrace{\partial_q g_{lj}}_{****})c^i_{mk},\\
B=&-\frac{1}{2}g^{is}c^q_{km}d\theta_{qs}c^m_{lj}+\frac{1}{2}g^{ms}c^q_{kj}d\theta_{qs}c^i_{ml}+\frac{1}{2}g^{is}c^q_{lm}d\theta_{qs}c^m_{kj}-\frac{1}{2}g^{ms}c^q_{lj}d\theta_{qs}c^i_{mk}.
\end{align*}
Using the associativity of the product, i.e. $c^q_{lm}c^m_{kj}=c^q_{km}c^m_{lj}$, the first and the third terms in the last expression cancel out, and we remain with 
$$
B=\frac{1}{2}g^{ms}c^q_{kj} c^i_{ml}\partial_q \theta_s-\frac{1}{2}g^{ms}c^q_{kj}c^i_{ml}\partial_s \theta_q-\frac{1}{2}g^{ms}c^q_{lj}c^i_{mk}\partial_q \theta_s+\frac{1}{2}g^{ms}c^q_{lj}c^i_{mk}\partial_s\theta_q.
$$
Since $g$ is compatible with the product $\circ$, we have
$$
B=\frac{1}{2}g^{mi}c^q_{kj} c^s_{ml}\partial_q \theta_s-\frac{1}{2}g^{ms}c^q_{kj}c^i_{ml}\partial_s \theta_q-\frac{1}{2}g^{mi}c^q_{lj}c^s_{mk}\partial_q \theta_s+\frac{1}{2}g^{ms}c^q_{lj}c^i_{mk}\partial_s\theta_q.
$$
Now we integrate by parts obtaining: 
\begin{align*}
B=&\frac{1}{2}g^{mi}c^q_{kj}\partial_q(c^s_{ml}\theta_s)-\frac{1}{2}g^{mi}c^q_{kj}\theta_s\partial_qc^s_{ml}-\frac{1}{2}g^{ms}c^i_{ml}\partial_s(c^q_{kj}\theta_q)+\frac{1}{2}g^{ms}c^i_{ml}\theta_q\partial_sc^q_{kj}\\
&-\frac{1}{2}g^{mi}c^q_{lj}\partial_q(c^s_{mk}\theta_s)+\frac{1}{2}g^{mi}c^q_{lj}\theta_s\partial_q c^s_{mk}+\frac{1}{2}g^{ms}c^i_{mk}\partial_s(c^q_{lj}\theta_q)-\frac{1}{2}g^{ms}c^i_{mk}\theta_q\partial_sc^q_{lj}.
\end{align*}
Now observe that $\theta_pc^p_{sl}=g_{pr}e^rc^p_{sl}$, and using the compatibility of $g$ with $\circ$ this is equal to $g_{ps}e^rc^p_{rl}=g_{ps}\delta^p_l=g_{sl}$, since $e$ is the unit of $\circ$, and analogously for other indices. Therefore, $B$ simplifies to 
\begin{align*}
B=&\underbrace{\frac{1}{2}g^{mi}\partial_q g_{ml}c^q_{kj}}_{*}-\underbrace{\frac{1}{2}g^{ms}\partial_s g_{kj}c^i_{ml}}_{**}-\underbrace{\frac{1}{2}g^{mi}\partial_q g_{mk} c^q_{lj}}_{***}+\underbrace{\frac{1}{2}g^{ms}\partial_s g_{lj} c^i_{mk}}_{****}\\
&+\frac{1}{2}g^{mi}\theta_s\left[c^q_{lj}\partial_q c^s_{mk}-c^q_{kj}\partial_q c^s_{ml}\right]+\frac{1}{2}g^{ms}\theta_q\left[c^i_{ml}\partial_s c^q_{kj}-c^i_{mk}\partial_s c^q_{lj}\right].
\end{align*}
Note that the first four terms of $B$ cancel out with four terms of $A$, and we are left with $A+B=C+D$, where
\begin{align*}
C=&\partial_k c^i_{lj}-\partial_l c^i_{kj}+\frac{1}{2}g^{im}(\partial_k g_{mq}-\partial_m g_{kq})c^q_{lj}-\frac{1}{2}g^{ms}(\partial_k g_{sj}+\partial_j g_{sk})c^i_{ml}
-\frac{1}{2}g^{im}(\partial_l g_{qm}-\partial_m g_{lq})c^q_{kj}\\
&+\frac{1}{2}g^{ms}(\partial_l g_{sj}+\partial_j g_{sl})c^i_{mk},\\
D=&\frac{1}{2}g^{mi}\theta_s\left[c^q_{lj}\partial_q c^s_{mk}-c^q_{kj}\partial_q c^s_{ml}\right]+\frac{1}{2}g^{ms}\theta_q\left[c^i_{ml}\partial_s c^q_{kj}-c^i_{mk}\partial_s c^q_{lj}\right].
\end{align*}
To conclude, we rewrite $D$ using the Hertling-Manin condition \eqref{HMeq1}. Renaming summed indices and using the compatibility of $g$ with $\circ$ in the second term (i.e. $g^{ms}c^i_{ml}=g^{mi}c^s_{ml}$ and analogously for the other monomial in the second term), we obtain:
\begin{align*}
D=&\frac{1}{2}g^{is}\theta_p\left[c^q_{lj}\partial_q c^p_{sk}-c^q_{kj}\partial_q c^p_{sl}\right]+\frac{1}{2}g^{is}\theta_p\left[c^q_{sl}\partial_q c^p_{kj}-c^q_{sk}\partial_q c^p_{lj}\right]=\\
=&\frac{1}{2}g^{is}\theta_p\left[c^q_{lj}\partial_q c^p_{sk}-c^q_{sk}\partial_q c^p_{lj}\right]-\frac{1}{2}g^{is}\theta_p\left[c^q_{kj}\partial_q c^p_{sl}-c^q_{sl}\partial_q c^p_{kj}\right],
\end{align*}
where in the last line we have rearranged the terms in order to apply \eqref{HMeq1} to the expressions inside square brackets. Applying \eqref{HMeq1} and using the fact that $\theta_pc^p_{jq}=g_{jq}$ (and analogously for other indices), we obtain:
\begin{align*}
D=&\frac{1}{2}g^{is}\left[ g_{jq}\partial_l c^q_{sk}-g_{qk}\partial_s c^q_{jl}+g_{lq}\partial_j c^q_{sk}-g_{qs}\partial_k c^q_{jl}\right]-\frac{1}{2}g^{is}\left[g_{jq}\partial_k c^q_{sl}-g_{ql}\partial_s c^q_{jk}+g_{kq}\partial_j c^q_{sl}-g_{qs}\partial_l c^q_{jk} \right]\\
=&\frac{1}{2}g^{is}\left[\partial_l(g_{jq}c^q_{sk})-c^q_{sk}\partial_lg_{jq}-\cancel{\partial_s(g_{qk}c^q_{jl})}+c^q_{jl}\partial_sg_{qk}+\cancel{\partial_j (g_{lq}c^q_{sk})}-c^q_{sk}\partial_j g_{lq}\right] -\frac{1}{2}\partial_k c^i_{jl}\\
&-\frac{1}{2}g^{is}\left[\partial_k(g_{jq}c^q_{sl})-c^q_{sl}\partial_k(g_{jq})-\cancel{\partial_s(g_{ql}c^q_{jk})}+c^q_{jk}\partial_sg_{ql}+\cancel{\partial_j(g_{kq}c^q_{sl})}-c^q_{sl}\partial_jg_{kq} \right]+\frac{1}{2}\partial_l c^i_{jk},
\end{align*}
where we have performed integration by parts and used the invariance of $g$ with respect to $\circ$. Now in the expression for $D$ we write $\partial_l(g_{jq}c^q_{sk})=\partial_l(g_{sq}c^q_{jk})$ and expand the expression using the Leibnitz rule and do the same with $\partial_k(g_{jq}c^q_{sl})$:
\begin{align*}
D=&\frac{1}{2}g^{is}\left[c^q_{jk}\partial_l g_{sq}-c^q_{sk}\partial_l g_{jq}+c^q_{jl}\partial_s g_{qk}-c^q_{sk}\partial_j g_{lq} \right]+\frac{1}{2}\partial_l c^i_{jk}-\frac{1}{2}\partial_k c^i_{jl}\\
&-\frac{1}{2}g^{is}\left[c^q_{jl}\partial_k g_{sq}-c^q_{sl}\partial_k(g_{jq})+c^q_{jk}\partial_sg_{ql}-c^q_{sl}\partial_jg_{kq}  \right]-\frac{1}{2}\partial_k c^i_{jl}+\frac{1}{2}\partial_l c^i_{jk}.
\end{align*}
To compare more effectively $C$ and $D$, we rewrite $C$ as
\begin{align*}
C=&\partial_k c^i_{lj}-\partial_l c^i_{kj}+\frac{1}{2}g^{is}(\partial_k g_{sq}-\partial_s g_{kq})c^q_{lj}-\frac{1}{2}g^{is}(\partial_k g_{qj}+\partial_j g_{qk})c^q_{sl}\\
&-\frac{1}{2}g^{is}(\partial_l g_{qs}-\partial_s g_{lq})c^q_{kj}+\frac{1}{2}g^{is}(\partial_l g_{qj}+\partial_j g_{ql})c^q_{sk},
\end{align*}
where we have renamed summed indexes and used the invariance of $g$ with respect to $\circ.$
It is now immediate to see that $C+D=0$, thus proving that $\nabla$ is compatible with the product $\circ$. 

Before proving that $\nabla$ is flat, let us prove some preliminary lemmas.

\begin{lemma}\label{lemma:nabla and Lie}
Let $M$ be a manifold equipped with a torsionless connection $\nabla$ and a vector field~$X$ such that $\nabla X=0$. Then $\cL_X T=\nabla_X T$ for any tensor field $T$ on $M$.
\end{lemma}
\begin{proof}
Note that for any vector field $Y$ we have $\nabla_X Y=\cL_X Y+\nabla_Y X=\cL_X Y$. Therefore, if $T$ is a $(k,l)$ tensor, then 
\begin{align*}
(\cL_X T)(Y_1,\ldots,Y_l)=&\cL_X(T(Y_1,\ldots,Y_l))-\sum_{i=1}^l T(Y_1,\ldots,\cL_X Y_i,\ldots,Y_l)=\\
=&\nabla_X(T(Y_1,\ldots,Y_l))-\sum_{i=1}^l T(Y_1,\ldots,\nabla_X Y_i,\ldots,Y_l)=(\nabla_X T)(Y_1,\ldots,Y_l),
\end{align*}
as required.
\end{proof}

\begin{lemma}\label{lemma1}
Consider a manifold $M$ equipped with a commutative associative product $\circ$ and a unit vector field $e$. 
\begin{itemize}
\item[1.] If $M$ is equipped with a (pseudo-)Riemannian metric $g$ such that $\cL_e g=0$, then $\cL_e\theta=0$.
\item[2.] If $M$ is equipped with a torsionless connection $\nabla$ compatible with the product $\circ$ and such that $\nabla e=0$, then $\cL_e c=0$.
\end{itemize}
\end{lemma} 
\begin{proof}
For Part 1 we compute
$$
(\cL_e \theta)(Y)=e(\theta(Y))-\theta(\cL_e Y)=(\cL_e g)(e,Y)+g(\cL_e e, Y)+\cancel{g(e, \cL_e Y)}-\cancel{\theta(\cL_e Y)}=0,
$$
and for Part 2:
$$
(\cL_e c)(X,Y)\stackrel{\text{Lemma~\ref{lemma:nabla and Lie}}}{=}(\nabla_e c)(X,Y)=(\nabla_X c)(e,Y)=\cancel{\nabla_X(c(e,Y))}-c(\nabla_X e,Y)-\cancel{c(e,\nabla_X Y)}=0.
$$
\end{proof}

\begin{lemma}\label{flatlemma}
Consider a manifold $M$ equipped with a commutative associative product $\circ$, a unit vector field $e$, and a torsionless connection $\nabla$. Then~$\nabla$ is flat if and only if the curvature operator~$R$ of~$\nabla$ satisfies condition~\eqref{eqRcRcRc=0} together with the condition
\begin{equation}\label{eqzero}
R(e,X) =0, \quad X \in \cT_M.
\end{equation}
\end{lemma}
\begin{proof}
Substituting $Y=e$ in \eqref{eqRcRcRc=0} and using \eqref{eqzero} and the fact that $e$ is the unit for $\circ$, one obtains immediately that $R(Z,X)(W)=0$ for all local vector fields $X, Z, W$. The converse statement is obvious. 
\end{proof}

\begin{proposition}\label{proposition:R and tR identity}
Consider a manifold $M$ equipped with a commutative associative product~$\circ$ and two connections $\nabla,\tnabla$ such that $\nabla$ is compatible with the product and 
$$
\nabla_X Y=\tnabla_X Y+W(X\circ Y),\quad X,Y\in\cT_M,
$$
for some $(1,1)$ tensor field $W$. Then 
\begin{align}
&R(Y,Z)(X\circ W)+R(X,Y)(Z\circ W)+R(Z,X)(Y\circ W)=\label{eq:R and tR identity}\\
=&\tR(Y,Z)(X\circ W)+\tR(X,Y)(Z\circ W)+\tR(Z,X)(Y\circ W),\quad X,Y,Z,W\in\cT_M,\notag
\end{align}
where $R,\tR$ and the Riemann tensors for the connections $\nabla,\tnabla$, respectively.
\end{proposition}
\begin{proof}
Denote by $\Gamma^j_{jk}$ the Christoffel symbols of the connection $\nabla$ and by $a^h_{jk}$ the Christoffel symbols of the connection $\tnabla$. We have
\[
\Gamma^h_{jk}:=a^h_{jk}+b^h_{jk},
\]
where $b^j_{jk}=c_{jk}^s W_s^h$ and, therefore,
\begin{align*}
R^h_{ikj}=&\tR^h_{ikj}+\d_k b^h_{ij}-\d_j b^h_{ik}+a^s_{ij}b^h_{ks}-a^s_{ik}b^h_{js}+b^s_{ij}a^h_{ks}-b^s_{ik}a^h_{js}+b^s_{ij}b^h_{ks}-b^s_{ik}b^h_{js}=\\
=&\tR^h_{ikj}+\nabla_kb^h_{ij}-\nabla_jb^h_{ik}+b^h_{sj}b^s_{ik}-b^h_{sk}b^s_{ij}=:\tR^h_{ikj}+T^h_{ikj}.
\end{align*}
Using the condition $\nabla_kc^i_{jl}=\nabla_jc^i_{kl}$ it is immediate to prove that
\[
T^h_{ikj}=\left(c^m_{ij}\nabla_k W_m^h-c^m_{ik}\nabla_j W_m^h\right)+W^s_lW^h_m(c^l_{ik}c^m_{sj}-c^l_{ij}c^m_{sk})
\]
or, equivalently,
\begin{align*}
R(X,Y)(Z)=\tR(X,Y)(Z)&+\left[(\nabla_X W)(Z\circ Y)-(\nabla_Y W)(Z\circ X)\right]\\
&+\left[W(W(Z\circ X)\circ Y)-W(W(Z\circ Y)\circ X)\right],\quad X,Y,Z\in\cT_M,
\end{align*}
which implies~\eqref{eq:R and tR identity} via a simple straightforward computation.
\end{proof}

We are now ready to prove that $\nabla$ is flat. Denote by $\hR$ the Riemann tensor of $\nabla$. By Lemma~\ref{flatlemma} it is sufficient to check that $\hR$ satisfies condition~\eqref{eqRcRcRc=0} together with condition~\eqref{eqzero}. For the $(1,1)$ tensor field $W$ on $M$ defined by $W(X):=-\frac{1}{2}(\iota_X d\theta)^\sharp$ we have $\nabla_X Y=\tnabla_X Y+W(X\circ Y)$ and, therefore, by Proposition~\ref{proposition:R and tR identity} condition~\eqref{eqRcRcRc=0} is true for the tensor $\hR$, since this condition is true for the tensor $R$. 

It remains to check that $\hR(e,X)=0$ or, equivalently,
\begin{gather*}
\hR^h_{ikj}e^j=0.
\end{gather*} 
We have 
\[
\hR^h_{ikj}e^j=e^j\d_k\Gamma^h_{ij}-e(\Gamma^h_{ik})+\Gamma^s_{ij}e^j\Gamma^h_{ks}-\Gamma^s_{ik}\Gamma^h_{js}e^j,
\]
Using the condition $\nabla e=0$ we can reduce the above expression to
\begin{gather}\label{eq:hR with e}
\hR^h_{ikj}e^j=-e(\Gamma^h_{ik})-\d_k\d_ie^h-\Gamma^h_{is}\d_ke^s-\Gamma^h_{ks}\d_ie^s+\Gamma^s_{ik}\d_se^h.
\end{gather}
Let us express
\[
\Gamma^h_{ik}=\tGamma^h_{ik}+b^h_{ik},\quad\text{where}\quad b^h_{ik}=-\frac{1}{2}g^{hs}c_{ik}^ld\theta_{ls}.
\]
Using that $\cL_e g=0$ it is not difficult to prove that
\[
e(\tGamma^h_{ik})=\tGamma^s_{ik}\d_se^h-\tGamma^h_{im}\d_ke^m-\tGamma^h_{km}\d_ie^m-\d_i\d_ke^h.
\]
Similarly using $\cL_e g =0$, and the conditions $\cL_e\theta=0$ and $\cL_e c=0$, which hold because of Lemma~\ref{lemma1}, one can prove that
\[
e(b^h_{ik})=b^s_{ik}\d_se^h-b^h_{im}\d_ke^m-b^h_{km}\d_ie^m.
\]
Combining the above relations we get
\[
e(\Gamma^h_{ik})=\Gamma^s_{ik}\d_se^h-\Gamma^h_{im}\d_ke^m-\Gamma^h_{km}\d_ie^m-\d_i\d_ke^h,
\]
and substituting this on the right-hand side of~\eqref{eq:hR with e} we get $\hR^h_{ikj}e^j=0$, as required. 
\end{proof}  

Let us remark that in Theorem \ref{thm1} no assumption is made about the semisimplicity of the product $\circ$ or even its regularity in the sense of David-Hertling (see \cite{DH}). The above theorem was proved in \cite[Theorem~1.13]{ABLR1} assuming that the product is semisimple.

\subsection{From flat F-manifolds to Riemannian F-manifolds}

Now we try to reconstruct a Riemannian F-manifold with Killing unit vector field starting from a flat F-manifold. 

\begin{theorem}\label{propinv2} 
Let $(M,\circ,\nabla, e)$ be a flat F-manifold and let $g$ be an invariant metric satisfying condition \eqref{nablafromg}. Then the tuple $(M,\circ,g,e)$ defines a Riemannian F-manifold with Killing unit vector field.
\end{theorem}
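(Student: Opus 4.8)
The plan is to reverse-engineer the argument of Theorem \ref{thm1}. We are given a flat F-manifold $(M,\circ,\nabla,e)$ and an invariant metric $g$ satisfying \eqref{nablafromg}; we must verify the two defining properties of a Riemannian F-manifold with Killing unit vector field, namely \eqref{eqRcRcRc=0} for the Riemann tensor of $g$, and $\cL_e g=0$. The starting observation is that condition \eqref{nablafromg} is precisely of the form appearing in Proposition \ref{proposition:R and tR identity}: if $\tnabla$ denotes the Levi-Civita connection of $g$, then I first want to show that the difference tensor $\nabla-\tnabla$ is given by $\nabla_X Y=\tnabla_X Y+W(X\circ Y)$ for the $(1,1)$ tensor $W(X):=-\tfrac12(\iota_X d\theta)^\sharp$. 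This follows because formula \eqref{eq:construction of nabla} produces a torsionless connection satisfying \eqref{nablafromg}, and by the uniqueness statement in Part 1 of Theorem \ref{thm1} (applied once we know $\cL_e g=0$, or alternatively by the direct uniqueness argument for a torsionless connection with prescribed $\nabla g$) this is the only such connection; hence our given $\nabla$ must coincide with it.

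The cleanest order of steps is therefore: first establish $\cL_e g=0$; then deduce $\nabla=\tnabla+W(\,\cdot\circ\,\cdot)$; then invoke Proposition \ref{proposition:R and tR identity} to transfer \eqref{eqRcRcRc=0} from $\nabla$ to $g$. For the Killing condition, the idea is to run the computation in the proof of $\nabla e=0$ from Theorem \ref{thm1} backwards. Concretely, from \eqref{eq:construction of nabla} one has the identity $g(\nabla_X e,Z)=g(\tnabla_X e,Z)-\tfrac12 d\theta(X,Z)$, and expanding $d\theta(X,Z)$ via \eqref{eqaux11} together with $(\tnabla_X g)(e,Z)=0$ and $(\tnabla_Z g)(e,X)=0$ and $[X,Z]=\tnabla_X Z-\tnabla_Z X$ gives $g(\nabla_X e,Z)=\tfrac12 g(\tnabla_X e,Z)+\tfrac12 g(\tnabla_Z e,X)$, i.e. $g(\nabla_X e,Z)=\tfrac12\big(e(g(X,Z))-g([e,X],Z)-g(X,[e,Z])\big)=\tfrac12(\cL_e g)(X,Z)$. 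But $\nabla e=0$ by the flat F-manifold hypothesis, so the left-hand side vanishes for all $X,Z$; since $g$ is non-degenerate this forces $(\cL_e g)(X,Z)=0$, which is exactly \eqref{eqKillingunit}. Note this step only uses \eqref{nablafromg} (in the guise of \eqref{eq:construction of nabla}, equivalently $g(\nabla_X e,Z)-g(\tnabla_X e,Z)=-\tfrac12 d\theta(X,Z)$, which is the $Y=e$ case of \eqref{nablafromg} rewritten) and $\nabla e=0$; it does not need flatness.

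Once $\cL_e g=0$ is in hand, the second step is immediate: the connection defined by \eqref{eq:construction of nabla} is torsionless and satisfies \eqref{nablafromg}, and by the uniqueness already quoted in Theorem \ref{thm1} our $\nabla$ equals it, so $\nabla_X Y=\tnabla_X Y+W(X\circ Y)$ with $W(X)=-\tfrac12(\iota_X d\theta)^\sharp$. Since $\nabla$ is compatible with the product (it is a flat F-manifold), Proposition \ref{proposition:R and tR identity} applies and yields
\[
\sum_{\mathrm{cyc}} R(Y,Z)(X\circ W)=\sum_{\mathrm{cyc}}\hR(Y,Z)(X\circ W),
\]
where $R$ is the Riemann tensor of $g$ (i.e. of $\tnabla$) and $\hR$ that of $\nabla$; here the sum is the cyclic sum over $X,Y,Z$. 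Because $\nabla$ is flat, $\hR=0$, so the right-hand side vanishes and we obtain \eqref{eqRcRcRc=0} for $g$. Together with $\cL_e g=0$ this shows $(M,\circ,g,e)$ is a Riemannian F-manifold with Killing unit vector field.

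The only genuinely delicate point is the bookkeeping in the Killing-vector computation — making sure the metric-compatibility identities $(\tnabla_X g)(e,Z)=0$ are substituted with the right index placements so that the three terms of $d\theta(X,Z)$ recombine into $(\cL_e g)(X,Z)$ rather than into some other combination. Everything else is a direct appeal to results already established in the excerpt: the uniqueness clause of Theorem \ref{thm1} Part 1, and Proposition \ref{proposition:R and tR identity} together with Lemma \ref{flatlemma}'s philosophy that flatness is what kills the cyclic curvature sum. I would also remark, as the authors likely do, that no semisimplicity or David–Hertling regularity hypothesis on $\circ$ is used anywhere in this direction either.
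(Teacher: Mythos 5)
Your proposal is correct, and for the curvature condition \eqref{eqRcRcRc=0} it follows the paper's own route: identify the Levi-Civita connection $\tnabla$ of $g$ with $\nabla - W(\,\cdot\circ\,\cdot)$, where $W(X)=-\frac12(\iota_X d\theta)^\sharp$, and then apply Proposition~\ref{proposition:R and tR identity} together with the flatness of $\nabla$. Where you genuinely diverge is the Killing condition. The paper obtains $\cL_e g=0$ in one line, \emph{before} and independently of any identification of $\tnabla$: since $\nabla e=0$, Lemma~\ref{lemma:nabla and Lie} gives $(\cL_e g)(Y,Z)=(\nabla_e g)(Y,Z)$, and \eqref{nablafromg} with $X=e$ turns this into $\frac12 d\theta(Y,Z)+\frac12 d\theta(Z,Y)=0$ by skew-symmetry of $d\theta$. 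You instead run the $\nabla e=0$ computation of Theorem~\ref{thm1} backwards to reach $g(\nabla_X e,Z)=\frac12(\cL_e g)(X,Z)$; this is valid, but one caveat: the identity $g(\nabla_X e,Z)-g(\tnabla_X e,Z)=-\frac12 d\theta(X,Z)$ is not literally ``the $Y=e$ case of \eqref{nablafromg} rewritten''. Setting $Y=e$ in \eqref{nablafromg} and subtracting $(\tnabla_X g)(e,Z)=0$ yields $g(\nabla_X e,Z)-g(\tnabla_X e,Z)=-\frac12 d\theta(X,Z)-\frac12 d\theta(X\circ Z,e)+g(e,\tnabla_X Z-\nabla_X Z)$, and cancelling the last two terms already requires knowing the difference tensor $\nabla-\tnabla$. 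So in your scheme the identification $\nabla=\tnabla+W(\,\cdot\circ\,\cdot)$ must come first, via the Koszul-type uniqueness of a torsionless connection with prescribed $(\nabla_X g)(Y,Z)$ — which, as you correctly note, uses neither $\cL_e g=0$ nor \eqref{eqRcRcRc=0}, so there is no circularity. With that reordering your argument closes; the paper's order (Killing property first, then the metricity of $\nabla+\frac12(\iota_{\cdot\circ\cdot}d\theta)^\sharp$ by direct computation, then Proposition~\ref{proposition:R and tR identity}) buys a shorter path because the Killing step never touches the Levi-Civita connection at all.
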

\begin{proof} 
The proof is based on the following lemma.

\begin{lemma}\label{propinv1} 
Let $(M,\circ,\nabla,e)$ be a flat F-manifold and $g$ be any metric satisfying condition~\eqref{nablafromg}.
\begin{itemize}
\item[1.] We have $\cL_e g=0$.
\item[2.] Let $\tnabla$ be the connection defined by 
\begin{equation*}
\tnabla_X Y:=\nabla_X Y+\frac{1}{2}(\iota_{X\circ Y}d\theta)^{\sharp}.
\end{equation*}
Then $\tnabla g=0$. 
\item[3.] The Riemann tensor of $g$ satisfies condition~\eqref{eqRcRcRc=0}.
\end{itemize}
\end{lemma}
\begin{proof} 
Part 1:
$$
(\cL_e g)(Y,Z)\stackrel{\text{Lemma~\ref{lemma:nabla and Lie}}}{=}(\nabla_e g)(Y,Z)\stackrel{\eqref{nablafromg}}{=}\frac{1}{2}d\theta(Y,Z)+\frac{1}{2}d\theta(Z,Y)=0,
$$
by skewsymmetry of $d\theta$.

Part 2:
\begin{align*}
(\tnabla_X g)(Y,Z)=&X(g(Y,Z))-g(\tnabla_X Y, Z)-g(Y,\tnabla_X Z)=\\
=&(\nabla_X g)(Y,Z)+g(\nabla_X Y, Z)+g(Y, \nabla_X Z)-g(\tnabla_X Y, Z)-g(Y,\tnabla_X Z)=\\\
=&(\nabla_X g)(Y,Z)-\frac{1}{2}d\theta(X\circ Y,Z)-\frac{1}{2}d\theta(X\circ Z,Y)\stackrel{\eqref{nablafromg}}{=}\\
=&0.
\end{align*}

Part 3 immediately follows from Part 2 and Proposition~\ref{proposition:R and tR identity}, since $\nabla$ is flat. 
\end{proof}

The theorem obviously follows from the lemma.
\end{proof}

\begin{remark}
In the case of a semisimple flat F-manifold, in canonical coordinates, an invariant metric is diagonal, $g_{ij}=\delta^i_j g_{ii}$, and system \eqref{nablafromg} reads
\begin{equation*}
\delta^i_j\d_k g_{ii}-\Gamma^j_{ki}g_{jj}-\Gamma^i_{kj}g_{ii}=\f{1}{2}(\delta^i_k-\delta^j_k)(\partial_i g_{jj}-\partial_j g_{ii}),\quad 1\le i,j,k\le \dim M.
\end{equation*}  
This is a complete compatible system whose general solution depends on $n=\dim M$ arbitrary constants (see \cite[Proposition~1.8]{ABLR1} and the paragraph before). 
\end{remark}

\subsection{An example: the Lobachevsky hyperbolic half-plane}\label{subsection:Lobachevsky1}

We conclude this section with an example of Riemannian F-manifold with Killing unit vector field. Consider the following (rotated) version of the Lobachevsky hyperbolic half-plane: $\mathcal{H}:=\{(x,y)\in \mathbb{R}^2|x>y\}$ with the metric $g:=\frac{2}{(x-y)^2}(dx^2+dy^2).$ We declare $(x,y)$ to be canonical coordinates for a semisimple product $\circ$, so that in these coordinates $c^i_{jk}=\delta^i_j\delta^i_k$. The unit vector field is $e=\partial_x+\partial_y$. It is immediate to check that $\cL_e g=0$ and that $g$ is invariant. Finally, condition~\eqref{eqRcRcRccoo=0} in this case reads:
\[
R^j_{skl}\delta^s_m\delta^s_i+R^j_{smk}\delta^s_l\delta^s_i+R^j_{slm}\delta^s_k\delta^s_i=0,\quad 1\le k,l,m,i,j\le 2,
\]
which is clearly satisfied if $k,l,m\ne i$. Otherwise, since condition~\eqref{eqRcRcRccoo=0} is symmetric with respect to cyclic permutations of $k,l,m$, we can assume that $m=i$. If also $l,k\ne i$, then automatically $l=k$ (because $\dim \mathcal{H}=2$) and the condition reduces to $R^j_{mkk}=0$, which is true by skewsymmetry. If exactly two indices from $k,l,m$ are equal to $i$, then we can assume that $i=m=l\ne k$, and the condition reduces to $R^j_{iki}+R^j_{iik}=0$, which is also satisfied by skewsymmetry. If $i=m=k=l$, then all the terms in the constraint are equal to $R^j_{iii}=0$, again by skewsymmetry. Therefore, $(\mathcal{H}, \circ, g, e)$ is a semisimple Riemannian F-manifold with Killing unit vector field.

It is easy to compute that flat coordinates of the associated flat F-manifold are given by $t^1=\frac{4}{x-y}$, $t^2=\frac{x+y}{2}$, and a corresponding vector potential is $(F^1,F^2)=\left(t^1t^2,\frac{(t^2)^2}{2}+\frac{2}{3}(t^1)^{-2}\right)$. The unit vector field is $\frac{\d}{\d t^2}$.

In Section~\ref{subsection:Lobachevsky2} we will see that the Lobachevsky hyperbolic half-plane is an example of a Riemannian F-manifold with flat normal bundle. 

\subsection{A non-semisimple example in dimension $2$} Let us consider the flat structure defined in David-Hertling canonical coordinates $(x,y)$ (see \cite{DH}) by the data
\[c^k_{ij}=\delta^k_{i+j-1},\qquad e=\partial_{x},\qquad E=x\partial_{x}+y\partial_{y},\]
and by the Christoffel symbols (only non vanishing symbols are listed)
\[\Gamma_{22}^{1}=\f{b}{y},\qquad\Gamma_{22}^{2}=\f{a}{y},\]
where $a$ and $b$ are constants (we list in the Appendix the vector potentials associated with this family). It is easy to check that the associated metrics exist only if $b=0$ and are given by
\[g=\begin{bmatrix}
f(y) & cy^a \\
cy^a & 0
\end{bmatrix}\]
where $c$ is an abitrary constant and $F(y)$ is an arbitrary function. 

\subsection{A non-semisimple example in dimension $3$} Let us consider the flat structure defined in David-Hertling canonical coordinates $(x,y,z)$ by the data
\[c^k_{ij}=\delta^k_{i+j-1},\qquad e=\partial_{x},\qquad E=x\partial_{x}+y\partial_{y}+z\partial_{z},\]
and by the Christoffel symbols (only non-vanishing symbols are listed)
\[\Gamma_{23}^{3}=\Gamma_{32}^{3}=\f{a}{y},\qquad\Gamma_{22}^{3}=\frac{(ab+2b)y-2az}{(a+2)}\f{1}{y^2},\qquad\Gamma_{22}^{2}=\f{a(a+1)}{(a+2)}\f{1}{y},\]
where $a\ne2$ and $b$ are constants (see \cite[Theorem 5.10]{ALmulti}). The associated invariant metrics satisfying \eqref{nablafromg} (obtained using the computational software \textsf{Maple})  have the form
\[
g=\begin{bmatrix}
g_{11} & g_{12} & g_{13}\\
g_{21} & g_{22} & 0\\
g_{31} & 0 & 0
\end{bmatrix},
\]
where
\begin{align*}
g_{11}\,\,=\,\,&2F'(y)yz+G(y)y+\f{2}{9}\left(\f{a^2(a^2+3a+3)}{(a+2)^2}\right)cy^{\f{4}{3}\frac{a^2-3}{a+2}}z^2-2\left(bcy^{\f{1}{3}\frac{4a^{2}+3a-6}{a+2}}+\f{a^2-2}{a+2} F(y)\right)z,\\
g_{12}\,\,=\,\,&g_{21}\,\,=\,\,\f{2}{3}\left(\f{a(a+3)}{a+2}\right)cy^{\f{1}{3}\f{4a^2+3a-6}{a+2}}z+yF(y),\\
g_{13}\,\,=\,\,&g_{22}\,\,=\,\,g_{31}\,\,=\,\,cy^{\f{2}{3}\frac{(2a+3)a}{a+2}}.
\end{align*}
In the above formulas $c$ is an arbitrary constant while $F(y)$ and $G(y)$ are arbitrary functions.

\section{Homogeneous Riemannian F-manifolds and bi-flat F-manifolds}\label{sect2}

In this section we consider flat F-manifolds equipped with a second distinguished vector field, called the \emph{Euler vector field}. In the literature one can find three different and (a posteriori) equivalent ways to do this:
\begin{enumerate}
\item Dropping the axioms involving explicitly the metric apart from those involving only the Levi-Civita connection in the definition of a (conformal) Dubrovin-Frobenius manifold. This is the most straightforward way and leads to the definition of a Dubrovin-Frobenius manifold without metric \cite{ALcomplex} or flat F-manifold with (linear) Euler vector field \cite{DH2} or homogeneous flat F-manifold \cite{ABLR1} (see also~\cite[Remark~2.2]{ABLR2}).   
\item In terms of a flat meromorphic connection on the bundle $\pi^*TM$ on $\mathbb{P}\times M$ called Saito structure without metric (see \cite{Sab98} for details).
\item In terms  of a pair of compatible flat structures. This generalizes the notion of compatible flat metrics (flat pencil of metrics) and leads to the notion of a bi-flat F-manifold.
\end{enumerate}

\subsection{Homogeneous flat F-manifolds and bi-flat F-manifolds}

\begin{definition}\cite{ABLR1}
A \emph{homogeneous flat F-manifold} (also called a \emph{manifold with Saito structure} in~\cite{KMSh}) is a flat F-manifold $(M,\circ,\nabla,e)$ equipped with a vector field $E$, called the \emph{Euler vector field}, satisfying $\cL_E\circ=\circ$ and $\nabla\nabla E=0$.
\end{definition}

\begin{definition}\cite{AL}
A {\rm bi-flat F-manifold} is a manifold $M$ equipped with two different flat F-manifold structures $(\circ,\nabla,e)$ and  $(*,\nabla^{*},E)$ related by the following conditions:
\begin{enumerate}
\item $\cL_E \circ=\circ$;
\item $X*Y=(E\circ)^{-1}X\circ Y$ for all local vector fields $X,Y$ on~$M$; 
\item $\nabla_X(Y\circ Z)-\nabla_Y(X\circ Z)=\nabla^*_X(Y\circ Z)-\nabla^*_Y(X\circ Z)$ for all local vector fields $X,Y,Z$ on $M$. 
\end{enumerate}
\end{definition}

\begin{remark}
Note that the condition $\cL_E\circ=\circ$ in these two definitions implies that $\cL_E e=-e$.
\end{remark}

The connections $\nabla$ and $\nabla^*$ are called the \emph{natural connection} and the \emph{dual connection}, respectively. The dual connection is defined at the points where the operator $E\circ$  is invertible. Moreover, from the above conditions it follows that it is uniquely determined in terms of the natural connection, the dual product, and the Euler vector field via the following expression:
\[
\Gamma^{*k}_{ij} =\Gamma^k_{ij}- c^{*l}_{ji}\nabla_l E^k.
\]
Conversely, the natural connection is uniquely determined in terms of the dual connection, the product, and the unit vector field via:
\[
\Gamma^{k}_{ij} =\Gamma^{*k}_{ij}- c^{l}_{ji}\nabla^*_l e^k.
\]
Compatibility of the dual connection with the dual product is a consequence of the other axioms (see \cite{ALmulti} for details) and flatness of the dual connection is equivalent to linearity of the Euler vector field (see \cite{ALcomplex} for the semisimple case and \cite{KMSh} for the general case) $\nabla\nabla E=0$. Therefore, the structure of a bi-flat F-manifold is equivalent to the structure of a homogeneous flat F-manifold with invertible Euler vector field.

\subsection{From homogeneous Riemannian F-manifolds to homogeneous flat F-manifolds}

The construction of Theorem~\ref{thm1} produces a homogeneous flat F-manifold provided that we add some further conditions on a Riemannian F-manifold.
\begin{definition}
A \emph{homogeneous (pseudo-)Riemannian F-manifold} is a (pseudo-)Riemannian F-manifold equipped with a distinguished vector field $E$, called the \emph{Euler vector field}, such that the following conditions are satisfied:
\[
\cL_E\circ=\circ,\qquad \cL_E g=Dg,
\]
where $D$ is a constant.
\end{definition}

We can now state the main result of this section.
\begin{theorem}
Let $(M,\circ,g,e,E)$ be a homogeneous Riemannian F-manifold with Killing unit vector field. Then the data $(M,\circ,\nabla,e,E)$, where $\nabla$ is given by Theorem~\ref{thm1}, defines a homogeneous flat F-manifold.
\end{theorem}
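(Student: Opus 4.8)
The plan is to combine Theorem~\ref{thm1}, which already provides the flat F-manifold structure $(M,\circ,\nabla,e)$, with a verification that the Euler vector field $E$ satisfies the two extra axioms of a homogeneous flat F-manifold, namely $\cL_E\circ=\circ$ and $\nabla\nabla E=0$. The first of these is part of the hypothesis of a homogeneous Riemannian F-manifold, so nothing has to be done there. Thus the whole content of the theorem is the single identity $\nabla\nabla E=0$, where $\nabla$ is the connection constructed in~\eqref{eq:construction of nabla}.

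To prove $\nabla\nabla E=0$ I would first pass to the Levi-Civita connection $\tnabla$ of $g$ and use the correction formula $\nabla_X Y=\tnabla_X Y+W(X\circ Y)$ with $W(X)=-\frac12(\iota_X d\theta)^\sharp$, exactly as in the proof of Theorem~\ref{thm1}. The homogeneity condition $\cL_E g=Dg$ with $D$ constant is the key input: by Lemma~\ref{lemma:nabla and Lie}-type reasoning (applied not to $E$ itself, since $\nabla E\neq 0$ in general, but directly to the Killing-type equation), $\cL_E g=Dg$ says that $\tnabla\tnabla E$ is controlled — in fact the standard identity for the Lie derivative of the Levi-Civita connection along $E$ gives $(\cL_E\tnabla)(X,Y)=(\tnabla_X\tnabla_Y E-\tnabla_{\tnabla_X Y}E+R^{\tnabla}(E,X)Y)$, and since $\cL_E g=Dg$ is parallel, $\cL_E\tnabla=0$, forcing $\tnabla\tnabla E$ to be expressible purely through the curvature of $g$. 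Combined with the Riemannian F-manifold condition~\eqref{eqRcRcRc=0} (which constrains $R\circ c$), this should pin down $\tnabla\tnabla E$ enough. Then one computes the difference $\nabla_i\nabla_j E^k-\tnabla_i\tnabla_j E^k$, which is a universal polynomial expression in $W$, $c$, $\nabla W$ and $\nabla c$; here one uses $\nabla c$ symmetric (compatibility, proved in Theorem~\ref{thm1}), $\cL_e c=0$ and $\cL_e\theta=0$ (Lemma~\ref{lemma1}), and $\cL_E\circ=\circ$ rewritten in coordinates as $E^q\d_q c^k_{ij}+\d_i E^q c^k_{qj}+\d_j E^q c^k_{iq}-\d_q E^k c^q_{ij}=c^k_{ij}$ to reduce everything to zero.

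Alternatively, and perhaps more cleanly, I would argue via the dual connection: define $\nabla^*$ by $\Gamma^{*k}_{ij}=\Gamma^k_{ij}-c^{*l}_{ji}\nabla_l E^k$ on the locus where $E\circ$ is invertible. The structure $(M,\circ,\nabla,e,E)$ is a homogeneous flat F-manifold with invertible Euler field precisely when $(*,\nabla^*,E)$ together with $(\circ,\nabla,e)$ forms a bi-flat F-manifold, and by the discussion recalled after the definition of bi-flat F-manifold, flatness of $\nabla^*$ is equivalent to $\nabla\nabla E=0$. So it would suffice to show $\nabla^*$ is flat, and for this one can use Proposition~\ref{proposition:R and tR identity} again: writing $\nabla^*$ as a product-type correction of $\nabla$ or of $\tnabla$, the condition~\eqref{eqRcRcRc=0} (transported along by Lemma~\ref{propinv1}\,(3)-style arguments) plus $R^*(E,X)=0$ (the analogue of Lemma~\ref{flatlemma} for the dual structure, where $E$ plays the role of the dual unit up to the factor $E\circ$) would give flatness. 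One then removes the invertibility assumption on $E\circ$ by a density/continuity argument, since $\nabla\nabla E=0$ is a closed condition and the locus where $E\circ$ is invertible is open; if that locus is empty one treats it separately, but in the relevant homogeneous situations it is dense.

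The main obstacle I expect is the bookkeeping in relating $\cL_E g=Dg$ (a condition on the metric) to $\nabla\nabla E=0$ (a condition on the product-corrected connection): the correction term $W$ depends on $d\theta$, hence ultimately on $g$ and $e$, so differentiating it twice and feeding in the homogeneity of $g$, the invariance of $g$ under $\circ$, the vanishing $\cL_e\theta=0$, $\cL_e c=0$, and the Hertling-Manin identity~\eqref{HMeq1} all at once is where the real work lies. The cleanest route is almost certainly the dual-connection one, which offloads the flatness computation onto Proposition~\ref{proposition:R and tR identity} and the already-established fact that~\eqref{eqRcRcRc=0} propagates under product-type modifications of the connection; the only genuinely new point is then verifying $R^*(E,X)=0$, which should follow from $\nabla e=0$, $\cL_E\circ=\circ$, and the defining relation between $\nabla$ and $\nabla^*$ by a computation entirely parallel to the $\hR(e,X)=0$ computation at the end of the proof of Theorem~\ref{thm1}.
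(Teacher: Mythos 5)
Your reduction is correct: $\cL_E\circ=\circ$ is part of the hypotheses, so the whole content of the theorem is the single identity $\nabla\nabla E=0$, and your first route --- split $\nabla=\tnabla+W(\cdot\circ\cdot)$, use $\cL_E g=Dg$ with $D$ constant to get $\cL_E\tnabla=0$, and use $\cL_E\circ=\circ$ together with the homogeneity of $\theta$ to kill the correction term --- is exactly the paper's strategy. However, one step in your assembly is misidentified and would fail as written. From $\cL_E\tnabla=0$ you get $\tnabla\tnabla E=-R^{\tnabla}(E,\cdot)\,\cdot$, which is \emph{not} zero and is not controlled by condition~\eqref{eqRcRcRc=0}; that condition plays no role in this proof. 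The pivot the paper uses instead is the flatness of $\nabla$ itself, already established in Theorem~\ref{thm1}: for a flat torsionless connection one has, in coordinates,
\[
(\nabla\nabla E)^i_{kj}=\d_k\d_jE^i+\Gamma^i_{jl}\d_kE^l+\Gamma^i_{kl}\d_jE^l-\Gamma^l_{kj}\d_lE^i+E(\Gamma^i_{kj})=(\cL_E\Gamma)^i_{kj},
\]
so $\nabla\nabla E=0$ is equivalent to $\cL_E\nabla=0$. This splits as $\cL_E\tGamma+\cL_E b$ with $b^h_{jk}=-\frac{1}{2}g^{hs}c^l_{kj}d\theta_{ls}$; the first piece vanishes by $\cL_E g=Dg$, and the second because the homogeneity weights of $g^{-1}$, $c$, and $d\theta$ are $-D$, $+1$, and $D-1$ (using $\cL_E e=-e$, hence $\cL_E\theta=(D-1)\theta$), which sum to zero. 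Note also that the inputs you cite from Lemma~\ref{lemma1} ($\cL_e c=0$, $\cL_e\theta=0$) are not the relevant ones here; what is needed is $\cL_E c=c$ and $\cL_E\theta=(D-1)\theta$.

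Your ``cleaner'' dual-connection alternative does not work as stated. It requires $E\circ$ to be invertible, which is not assumed in the theorem and need not hold on a dense subset (nothing in the hypotheses prevents $E\circ$ from being non-invertible everywhere), so the density/continuity patch is unavailable in general. Moreover, the equivalence between flatness of $\nabla^*$ and $\nabla\nabla E=0$ is itself the nontrivial result quoted from \cite{ALcomplex,KMSh}, so routing the proof through it risks circularity unless you establish flatness of $\nabla^*$ by an independent computation --- which would be at least as long as the direct argument above.
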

\begin{proof} 
We only need to prove that $\nabla\nabla E=0$. Recall that
\[
\Gamma^{h}_{jk}=\tGamma^h_{jk}+b^h_{jk},
\]   
where $\Gamma^h_{jk}$ are the Christoffel symbols of the connection $\nabla$, $\tGamma^h_{jk}$ are the Christoffel symbols of the Levi-Civita connection of the metric $g$, and $b^h_{jk}=-\f{1}{2}g^{sh}c^l_{kj}d\theta_{ls}$. Using the flatness of $\nabla$ we obtain 
\begin{equation}\label{eq:nablanablaE}
(\nabla\nabla E)^i_{kj}=\d_k\d_j E^i+\Gamma^i_{jl}\d_k E^l+\Gamma^i_{km}\d_j E^m-\Gamma^m_{kj}\d_m E^i+E(\Gamma^i_{kj}).
\end{equation}
From the homogeneity of the metric it follows that
\begin{align*}
E(g^{ij})=&-Dg^{ij}+g^{is}\d_s E^j+g^{sj}\d_s E^i,\\
E(g_{ij})=&Dg_{ij}-g_{is}\d_j E^s-g_{sj}\d_i E^s.
\end{align*}
Using these facts, after an elementary but long computation, one gets
\begin{eqnarray*}
E(\tGamma^i_{jk})&=&\tGamma^s_{jk}\d_s E^i-\tGamma^i_{js}\d_k E^s-\tGamma^i_{sk}\d_j E^s-\d_j\d_k E^i.
\end{eqnarray*}
Since $\cL_E g=Dg$ and $\cL_E e=-e$, we have $\cL_E\theta=(D-1)\theta$ and $\cL_Ed\theta=(D-1)d\theta$. Using also that $\cL_E \circ=\circ$ we conclude that $\cL_E b^h_{jk}=0$ and, therefore,
\[
E(b^i_{jk})=b^s_{jk}\d_s E^i-b^i_{js}\d_k E^s-b^i_{sk}\d_j E^s.
\]
Combining the above relations we get
\[
E(\Gamma^i_{jk})=\Gamma^s_{jk}\d_s E^i-\Gamma^i_{js}\d_k E^s-\Gamma^i_{sk}\d_j E^s-\d_j\d_k E^i
\]
and substituting this on the right-hand side of~\eqref{eq:nablanablaE} we get $\nabla\nabla E=0$.
\end{proof}

\section{Riemannian F-manifolds with flat normal bundle}\label{section:normal bundle}
 
A particular class of Riemannian F-manifolds consists of F-manifolds equipped with an invariant (pseudo-)metric~$g$ satisfying condition \eqref{eqKillingunit} together with
\begin{equation}\label{qexp}
R^{ij}_{kh}=\sum_{\alpha=1}^N\varepsilon_{\alpha}\left(c^j_{kl}c^i_{hm}
-c^i_{kl}c^j_{hm}\right)X^l_{(\alpha)}X^m_{(\alpha)},\qquad\varepsilon_{\alpha}=\pm 1,
\end{equation}
where  $R^{ij}_{kh}=g^{is}R^{j}_{skh}$ are the components of the Riemann curvature
tensor of the metric $g$ and the vector fields $X_{(\alpha)}$ satisfy the conditions
\beq\label{sym}
c^i_{jl}\nabla_k X^l_{(\alpha)}=c^i_{kl}\nabla_jX^l_{(\alpha)},
\eeq
(the fact that relation~\eqref{qexp} implies condition~\eqref{eqRcRcRc=0} is a simple direct computation). Here $\nabla$ is the flat connection given by Theorem~\ref{thm1}. Let us call such Riemannian F-manifolds \emph{Riemannian F-manifolds with flat normal bundle}. 

Given a Riemannian F-manifold, it is easy to check that the affinors $W_{(\alpha)}:=X_{(\alpha)}\circ$ satisfy the conditions
\begin{align}
&R^{ij}_{kh}=\sum_{\alpha=1}^N\varepsilon_{\alpha}\left((W_{(\alpha)})^j_{k}(W_{(\alpha)})^i_{h}-(W_{(\alpha)})^i_{k}(W_{(\alpha)})^j_{h}
\right),\label{GMC0}\\
&\left[W_{(\alpha)},W_{(\alpha')}\right]=0,\label{GMC1}\\
&g_{ik}(W_{(\alpha)})^k_j=g_{jk}(W_{(\alpha)})^k_i,\label{GMC2}\\
&\tnabla_k(W_{(\alpha)})^i_j=\tnabla_j(W_{(\alpha)})^i_k,\label{GMC3}
\end{align}
where $\tnabla$ is the Levi-Civita connection associated to the metric $g$. Equations (\ref{GMC0},\ref{GMC1},\ref{GMC2},\ref{GMC3}) can be interpreted as the Gauss-Peterson-Mainardi-Codazzi equations for an $n$-dimensional submanifold with flat normal connection embedded in an 
$(n+N)$-dimensional (pseudo-)Euclidean space. The (pseudo-)metric $g$ can be interpreted as the induced metric and the affinors $W_{(\alpha)}$ as Weingarten operators.  

\subsection{Riemannian F-manifolds with flat normal bundle and Hamiltonian operators} 
   
Conditions (\ref{GMC0},\ref{GMC1},\ref{GMC2},\ref{GMC3}) also appear in the Hamiltonian formalism for systems of hydrodynamic type.
\begin{theorem}\label{theorem:Fer91}\cite{Fer91}
If $\det(g^{ij})\ne 0$, then the operator
\beq\label{WNL}
P^{ij}=g^{ij}\frac{d}{dx}-g^{is}\Gamma^j_{sk}u^k_x+\sum_{\alpha=1}^N\varepsilon_{\alpha}\left(W_{(\alpha)}\right)^i_ku^k_x
\left(\frac{d}{dx}\right)^{\!-1}\!\!\!\left(W_{(\alpha)}\right)^j_hu^h_x\,,\qquad\varepsilon_{\alpha}=\pm 1,
\eeq
defines a Hamiltonian operator if and only if $(g^{ij})$ defines a pseudo-Riemannian metric, the
coefficients $\Gamma^j_{sk}$ are the Christoffel symbols of the associated Levi-Civita connection $\nabla$, and the affinors $W_{(\alpha)}$ satisfy conditions (\ref{GMC0},\ref{GMC1},\ref{GMC2},\ref{GMC3}).
\end{theorem}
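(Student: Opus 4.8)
This is a reformulation of Ferapontov's description of Hamiltonian operators of hydrodynamic type with a weakly nonlocal tail, so the plan is to follow \cite{Fer91}. As for any candidate Hamiltonian operator, one has to verify two things: skew-symmetry of $P^{ij}$ as a formal operator, i.e. $P^{ij}=-(P^{ji})^\dagger$, and the Jacobi identity for the bracket it defines. Skew-symmetry is the easy half: the local piece $g^{ij}\frac{d}{dx}-g^{is}\Gamma^j_{sk}u^k_x$ is skew precisely when $g^{ij}=g^{ji}$ and $\nabla_k g^{ij}=0$, which is the classical Dubrovin--Novikov condition forcing $g$ to be a (pseudo-)metric and $\Gamma$ its Levi-Civita connection; and the tail $\sum_\alpha\varepsilon_\alpha (W_{(\alpha)})^i_k u^k_x\big(\frac{d}{dx}\big)^{-1}(W_{(\alpha)})^j_h u^h_x$ is skew automatically, since $\big(\frac{d}{dx}\big)^{-1}$ is skew-adjoint and multiplication operators are self-adjoint. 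Hence all the geometric content lives in the Jacobi identity.

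For the implication ``\eqref{GMC0}--\eqref{GMC3} $\Rightarrow$ Hamiltonian'' I would argue geometrically, as the paragraph preceding the theorem suggests. Equations \eqref{GMC0}--\eqref{GMC3} are exactly the Gauss--Peterson--Mainardi--Codazzi equations of an isometric immersion $f\colon(M,g)\hookrightarrow\mbE^{n+N}$ of $M$ into a pseudo-Euclidean space with flat normal bundle, the $W_{(\alpha)}$ being the Weingarten operators with respect to a parallel orthonormal normal frame of signature $(\varepsilon_1,\dots,\varepsilon_N)$; by the fundamental theorem of submanifold theory such an $f$ exists locally (here $\det g\ne0$ is used). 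On the loop space of $\mbE^{n+N}$ the constant ambient metric gives the constant-coefficient local Hamiltonian operator $\eta^{AB}\frac{d}{dx}$. The loops lying inside $f(M)$ form a submanifold cut out by the $N$ equations ``the normal components vanish''; these are second-class constraints whose matrix of mutual Poisson brackets is, in constraint functions adapted to the parallel normal frame, exactly the operator $\frac{d}{dx}$ times the normal signature, so its inverse brings in precisely the $\big(\frac{d}{dx}\big)^{-1}$ of the tail. Carrying out the Dirac reduction reproduces \eqref{WNL}, and since the Dirac bracket along second-class constraints of a Poisson bracket is again Poisson, $P^{ij}$ is Hamiltonian. (One can instead verify the Jacobi identity head-on via the Ferapontov--Mokhov criterion for weakly nonlocal operators, a finite but lengthy computation.)

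For the converse, assume $P^{ij}$ is Hamiltonian. Skew-symmetry already yields, as above, that $g$ is a (pseudo-)metric and $\Gamma$ its Levi-Civita connection. It then remains to expand the Jacobi identity and read the geometric conditions off the vanishing of its coefficients, organized by how many factors $\big(\frac{d}{dx}\big)^{-1}$ occur: the ``most nonlocal'' contributions (two inverse derivatives) give the commutativity \eqref{GMC1}, $[W_{(\alpha)},W_{(\alpha')}]=0$; the contributions with a single inverse derivative give the $g$-symmetry \eqref{GMC2} of the $W_{(\alpha)}$ and the Codazzi equation \eqref{GMC3}; and the purely local part of the identity, which for a bare Dubrovin--Novikov operator would assert flatness of $\nabla$, is now corrected by the tail and becomes precisely the Gauss equation \eqref{GMC0}.

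The main obstacle is the bookkeeping in the Jacobi identity for a weakly nonlocal operator --- keeping the local, the one-nonlocal and the two-nonlocal pieces apart and matching each vanishing coefficient with the correct line among \eqref{GMC0}--\eqref{GMC3}. I would handle this with the bivector/Schouten-bracket formalism (or Mokhov's criteria) for such operators rather than a brute-force $\delta$-function calculation, and I expect the converse to be the more delicate direction, the ``if'' part following cleanly once the submanifold picture is set up. Complete details are in \cite{Fer91}.
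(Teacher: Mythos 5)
The paper does not prove this theorem: it is stated as a quotation of Ferapontov's result \cite{Fer91}, so there is no internal proof to compare against. Judged on its own, your outline follows Ferapontov's original strategy faithfully: skew-symmetry fixes the metric data, the Jacobi identity (organized by the number of nonlocal factors) yields the commutativity \eqref{GMC1}, the symmetry \eqref{GMC2}, the Codazzi equation \eqref{GMC3} and the Gauss equation \eqref{GMC0}, and sufficiency is most transparently seen through the submanifold/Dirac-reduction picture --- which is precisely the interpretation the paper itself alludes to in the paragraph preceding the theorem. One correction: formal skew-symmetry of the local part only forces $g^{ij}=g^{ji}$ and the compatibility $\nabla_k g^{ij}=0$ (given the ansatz $b^{ij}_k=-g^{is}\Gamma^j_{sk}$); the vanishing of the torsion of $\Gamma$, and hence the full Levi-Civita property, does not follow from skew-symmetry but from the Jacobi identity, exactly as in the local Dubrovin--Novikov case, so that piece of the converse belongs with the Jacobi analysis rather than with the ``easy half''. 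Beyond that, your text is an outline that delegates the actual verification (the weakly nonlocal Jacobi identity, or the well-definedness of the Dirac reduction on the loop space) to \cite{Fer91}; since the theorem is itself presented as a citation, that level of detail is consistent with the paper's treatment.
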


We thus see that any Riemannian F-manifold with flat normal bundle gives, via this theorem, a non-local Hamiltonian operator. 

Ferapontov also considered the limiting case where the index $\alpha$ takes values in infinite set (even continuous, with the sum replaced by an integral) and conjectured that any integrable diagonalizable quasilinear system of PDEs (semi-Hamiltonian system) is indeed Hamiltonian w.r.t. a suitable Poisson bracket  of this class. This conjecture has been verified in the case of arbitrary $n$-component reductions of dKP \cite{GLR2009} and dispersionless 2D Toda hierarchy \cite{CLR2009}. In both cases the Riemann tensor of the generic reduction admits an integral representation that in some special cases reduces to a finite sum of residues at some marked points. In the case of semisimple Riemannian F-manifolds condition \eqref{eqRcRcRc=0} ensures that all flows of the associated principal hierarchy are semi-Hamiltonian. Thus, validity of the Ferapontov conjecture in this setting would imply the possibility of writing the Riemann tensor associated with the invariant metric $g$ of a Riemannian F-manifold in terms of the solutions of system \eqref{sym} with the finite sum in \eqref{qexp} possibly replaced  by an integral.

\subsection{An example with a special Lauricella bi-flat F-manifold}

\begin{proposition}{\rm (\cite[Section 7]{AL}, \cite[Section 5]{Limrn})}
The connection $\nabla$ defined by the Christoffel symbols
\[
\Gamma^{i}_{jk}:=0,\qquad\Gamma^{i}_{jj}:=-\Gamma^{i}_{ij},\qquad
\Gamma^{i}_{ij}:=\f{\epsilon_j}{u^i-u^j},\qquad\Gamma^{i}_{ii}:=-\sum_{l\ne i}\Gamma^{i}_{li},\qquad i\ne j\ne k\ne i,
\]
the dual connection $\nabla^{*}$ defined by the Christoffel symbols
\[
\Gamma^{*i}_{jk}:=0,\quad\Gamma^{*i}_{jj}:=-\f{u^i}{u^j}\Gamma^{*i}_{ij},\quad
\Gamma^{*i}_{ij}:=\f{\epsilon_j}{u^i-u^j},\quad\Gamma^{*i}_{ii}:=-\sum_{l\ne i}\f{u^l}{u^i}\Gamma^{*i}_{li}-\f{1}{u^i}, \quad i\ne j\ne k\ne i,
\]
the products $c^i_{jk}:=\delta^i_j\delta^i_k$ and $c^{*i}_{jk}:=\frac{1}{u^i}\delta^i_j\delta^i_k$ and  the vector fields $e:=\sum_{k=1}^n \partial_k$ and $E:=\sum_{k=1}^n u^k\partial_k$ define a bi-flat semisimple F-manifold structure for any choice of $\epsilon_1,\ldots,\epsilon_n$. 
\end{proposition}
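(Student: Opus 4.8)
The plan is to run through the axioms of a bi-flat semisimple F-manifold, observing that all but one of them are either structural or immediate. Semisimplicity is clear: $c^i_{jk}=\delta^i_j\delta^i_k$ and $c^{*i}_{jk}=\frac1{u^i}\delta^i_j\delta^i_k$ are semisimple products with idempotents $\partial_1,\dots,\partial_n$; $e=\sum_k\partial_k$ is the unit of $\circ$, and $(E*X)^i=E^jc^{*i}_{jk}X^k=X^i$ shows $E$ is the unit of $*$. Since $(E\circ)^i_j=u^kc^i_{kj}=u^i\delta^i_j$, the operator $E\circ=\mathrm{diag}(u^1,\dots,u^n)$ is invertible wherever all $u^i\neq0$ and $\big((E\circ)^{-1}\circ\big)^i_{jk}=\frac1{u^i}\delta^i_j\delta^i_k=c^{*i}_{jk}$, which is axiom (2); and $(\mathcal L_Ec)^i_{jk}=E^l\d_lc^i_{jk}-c^l_{jk}\d_lE^i+c^i_{lk}\d_jE^l+c^i_{jl}\d_kE^l=c^i_{jk}$ because $c$ is constant and $E^i=u^i$, which is axiom (1). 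Moreover a direct check shows that a torsionless connection whose Christoffel symbols have exactly the triangular shape of the listed $\Gamma^i_{jk}$ --- vanishing when the three indices are pairwise distinct, with $\Gamma^i_{jj}=-\Gamma^i_{ij}$ and $\Gamma^i_{ii}=-\sum_{l\neq i}\Gamma^i_{li}$ --- satisfies $\nabla e=0$ and $\nabla_kc^i_{lj}=\nabla_lc^i_{kj}$ automatically, for \emph{any} values of $\Gamma^i_{ij}$; the analogous statement for the shape of the listed $\Gamma^{*i}_{jk}$ (adapted to the unit $E$ and product $*$) gives $\nabla^*E=0$, and compatibility of $\nabla^*$ with $*$ then follows as in \cite{ALmulti}. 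Hence, by the equivalence recalled in Section~\ref{sect2} between bi-flat F-manifolds and homogeneous flat F-manifolds with invertible Euler vector field, the whole statement reduces to: (i) axiom (3); (ii) flatness of $\nabla$; (iii) $\nabla\nabla E=0$ --- after which flatness of $\nabla^*$ and the remaining dual axioms come for free.

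For axiom (3) I would verify the equivalent identity $\Gamma^{*k}_{ij}=\Gamma^k_{ij}-c^{*l}_{ji}\nabla_lE^k$ recalled in Section~\ref{sect2}. A short computation from the listed $\Gamma^i_{jk}$ gives $(\nabla_lE)^k=\epsilon_l$ for $k\neq l$ and $(\nabla_kE)^k=1-\sum_{m\neq k}\epsilon_m$, so $\nabla E$ is \emph{constant}; since $c^{*l}_{ji}$ is nonzero only when $i=j=l$, the identity becomes $\Gamma^{*k}_{ij}-\Gamma^k_{ij}=-\frac1{u^i}\delta_{ij}(\nabla_iE)^k$, and matching this against the listed $\Gamma^{*k}_{ij}$ in the handful of index patterns is a short manipulation using the partial fractions $\frac{u^k}{u^i(u^k-u^i)}=\frac1{u^k-u^i}-\frac1{u^i}$ and $\sum_{l\neq i}\frac{u^l}{u^i(u^i-u^l)}=\sum_{l\neq i}\big(\frac1{u^i-u^l}-\frac1{u^i}\big)$.

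The heart of the proof is the flatness of $\nabla$. I would compute $R^a_{bcd}=\d_c\Gamma^a_{db}-\d_d\Gamma^a_{cb}+\Gamma^a_{ce}\Gamma^e_{db}-\Gamma^a_{de}\Gamma^e_{cb}$ and organize the verification by the coincidence pattern of the indices. Writing $p_{ij}:=\epsilon_j/(u^i-u^j)$ (so $p_{ij}\neq-p_{ji}$ in general, reflecting the non-Egorov nature), one finds that in each pattern the substitution of the ansatz leaves either $0$ at once, or --- in patterns of type $R^a_{bcb}$ with $a,b,c$ distinct --- the expression $p_{ac}p_{cb}-p_{ac}p_{ab}+p_{ab}p_{bc}$, or --- in patterns where $\Gamma^i_{ii}=-\sum_lp_{il}$ intervenes --- a finite sum of such expressions; each of these is a rational identity in $u^1,\dots,u^n$ that telescopes by partial fractions (the basic instance being $\frac1{(x-z)(z-y)}-\frac1{(x-z)(x-y)}+\frac1{(x-y)(y-z)}=0$), valid for \emph{any} $\epsilon_j$. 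The condition $\nabla\nabla E=0$ is then bounded: $\nabla E$ being constant, $(\nabla\nabla E)^i_{jk}=\Gamma^i_{jm}(\nabla_kE)^m-\Gamma^m_{jk}(\nabla_mE)^i$, which again vanishes pattern-by-pattern. I expect flatness of $\nabla$ to be the main obstacle --- not conceptually, but in the bookkeeping of index coincidences and of the asymmetry of the $p_{ij}$; it can be shortened by using Lemma~\ref{flatlemma} to reduce $R=0$ to condition~\eqref{eqRcRcRc=0} together with $R(e,X)=0$, i.e.\ $\sum_dR^a_{bcd}=0$. (Alternatively, as in \cite{AL,Limrn}, one can realize the data as a solution of the augmented Darboux--Egorov linear system, for which flatness is automatic.)
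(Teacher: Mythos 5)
The paper itself gives no proof of this proposition: it is quoted verbatim from \cite[Section 7]{AL} and \cite[Section 5]{Limrn}, where the structure is produced by solving an augmented Darboux--Egorov system for Lam\'e coefficients and rotation coefficients, so that flatness of $\nabla$ and $\nabla^*$ is built in from the start. Your proposal is therefore necessarily a different route --- a direct, axiom-by-axiom verification in canonical coordinates --- and it is correct. I checked the key ingredients: the observation that any torsionless connection with the stated ``triangular'' shape automatically satisfies $\nabla e=0$ and $\nabla_k c^i_{lj}=\nabla_l c^i_{kj}$ for the product $c^i_{jk}=\delta^i_j\delta^i_k$, irrespective of the values of $\Gamma^i_{ij}$ (and likewise $\nabla^*E=0$); the computation $(\nabla_lE)^k=\epsilon_l$ for $k\ne l$, $(\nabla_kE)^k=1-\sum_{m\ne k}\epsilon_m$, so that $\nabla E$ is constant and the identity $\Gamma^{*k}_{ij}=\Gamma^k_{ij}-c^{*l}_{ji}\nabla_lE^k$ reduces to the partial-fraction manipulations you indicate; the reduction of $R^a_{bcb}$ (for $a,b,c$ distinct) to $p_{ac}p_{cb}-p_{ac}p_{ab}+p_{ab}p_{bc}$, which vanishes by the three-term telescoping identity you quote; and the vanishing of $(\nabla\nabla E)^i_{kj}=\Gamma^i_{km}(\nabla_jE)^m-\Gamma^m_{kj}(\nabla_mE)^i$, which in the mixed-index pattern comes down to $\epsilon_i p_{ij}+\epsilon_j p_{ji}=0$. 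Your use of the equivalence from Section~\ref{sect2} (bi-flat F-manifold $=$ homogeneous flat F-manifold with invertible Euler vector field) to get flatness of $\nabla^*$ and compatibility with $*$ for free, once the listed $\Gamma^{*}$ is identified with $\Gamma-c^{*}\nabla E$, is a legitimate shortcut. What the direct approach buys is an elementary, self-contained proof valid for arbitrary $\epsilon_j$ at the cost of bookkeeping; what the Darboux--Egorov approach of the cited references buys is a conceptual explanation (the $\epsilon_j$ arise as parameters of a Lauricella-type linear system) and automatic flatness. The only point worth making explicit is that the structure is defined on the open set where the $u^i$ are pairwise distinct and, for the dual data, nonzero.
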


These examples are related to the theory of Lauricella functions \cite{Lauricella}, Lauricella connections, and Lauricella manifolds \cite{CHL,Looijenga} and for this reason are called the \emph{Lauricella bi-flat structures} \cite{ALmulti}. 

We focus on the case $\epsilon_i=-1,\,i=1,\ldots,n$. This is related to the system of chromatography equations \cite{Fer91}. The invariant metric in this case has non-vanishing components
\[
g_{ii}=c_i\prod_{l\ne i}(u^l-u^i)^2,\quad i=1,\ldots,n,
\]  
where $c_1,\ldots,c_n$ are arbitrary nonzero constants. It was observed in \cite[Example~5]{Fer91} that the associated Riemann tensor admits the following quadratic expansion:
\[
R^{ij}_{ij}=-\sum_{\alpha=1}^nX^i_{(\alpha)}X^j_{(\alpha)},\quad i\ne j,
\]
where 
\[
X^i_{(\alpha)}=\d_i\left(\frac{1}{\sqrt{c_\alpha}\prod_{l\ne\alpha}(u^l-u^\alpha)}\right).
\]
This implies that condition~\eqref{qexp} is satisfied.
 
\begin{proposition}
The vector fields  $X^i_{(\alpha)}$, $\alpha=1,\ldots,n$ of this specific example  are covariantly constant w.r.t. the flat connection $\nabla$ with $\epsilon_i=-1$, $i=1,\ldots,n$. The rank of the $n\times n$ matrix whose columns are the vector fields $X_{(\alpha)}$ is $n-1$.
\end{proposition}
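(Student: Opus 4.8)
The plan is to work entirely in the canonical coordinates $(u^1,\dots,u^n)$, using the explicit Christoffel symbols of $\nabla$ specialized to $\epsilon_i=-1$ together with the explicit potentials
\[
\phi_\alpha:=\frac{1}{\sqrt{c_\alpha}\,\prod_{l\ne\alpha}(u^l-u^\alpha)},\qquad\text{so that}\qquad X^i_{(\alpha)}=\partial_i\phi_\alpha.
\]
A preliminary step is to record the elementary derivative formulas
\[
\partial_m\phi_\alpha=-\frac{\phi_\alpha}{u^m-u^\alpha}\ \ (m\ne\alpha),\qquad\partial_\alpha\phi_\alpha=\phi_\alpha\sum_{l\ne\alpha}\frac{1}{u^l-u^\alpha},
\]
which give closed expressions for all components $X^i_{(\alpha)}$.

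For Part~1, since $c^i_{jk}=\delta^i_j\delta^i_k$ and the only non-vanishing Christoffel symbols of $\nabla$ are $\Gamma^i_{ij}=\Gamma^i_{ji}=-\frac{1}{u^i-u^j}$, $\Gamma^i_{jj}=\frac{1}{u^i-u^j}$ (for $i\ne j$) and $\Gamma^i_{ii}=\sum_{l\ne i}\frac{1}{u^i-u^l}$, the equation $\nabla X_{(\alpha)}=0$ splits into the \emph{off-diagonal} identities
\[
\partial_jX^i_{(\alpha)}+\frac{X^j_{(\alpha)}-X^i_{(\alpha)}}{u^i-u^j}=0\qquad(i\ne j)
\]
and the \emph{diagonal} identities $\partial_iX^i_{(\alpha)}+\sum_{k\ne i}\frac{X^i_{(\alpha)}-X^k_{(\alpha)}}{u^i-u^k}=0$. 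I would substitute the closed expressions above and verify each identity, splitting into the subcases according to whether the free indices equal $\alpha$ or not; in every subcase the verification collapses to telescoping finite sums of the form $\sum_l\frac{1}{u^l-u^\alpha}$ and $\sum_l\frac{1}{(u^l-u^\alpha)^2}$. Note that the off-diagonal identities are precisely the component form, for the diagonal product, of the richness condition $c^i_{jl}\nabla_kX^l_{(\alpha)}=c^i_{kl}\nabla_jX^l_{(\alpha)}$, so only the diagonal identities genuinely use the explicit shape of $\phi_\alpha$.

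For Part~2, let $A$ be the $n\times n$ matrix with $A_{i\alpha}=X^i_{(\alpha)}$. The bound $\rk A\le n-1$ is immediate: for every $\alpha$ one has $\sum_i X^i_{(\alpha)}=e(\phi_\alpha)=0$, because $\phi_\alpha$ is invariant under the flow of $e=\sum_k\partial_k$, i.e.\ under a uniform translation of all the $u^k$; thus the row vector $(1,\dots,1)$ kills $A$ from the left. Equivalently, $\sum_\alpha\sqrt{c_\alpha}\,X_{(\alpha)}=0$, since $\sum_\alpha\sqrt{c_\alpha}\,\phi_\alpha=\sum_{\alpha=1}^n\prod_{l\ne\alpha}(u^l-u^\alpha)^{-1}$ is identically zero for $n\ge 2$ (this is the residue-at-infinity computation for $\prod_l(z-u^l)^{-1}$). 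For the opposite bound I would show $\ker A$ is one-dimensional: if $\sum_\alpha\lambda_\alpha X^i_{(\alpha)}=0$ for all $i$, then $F:=\sum_\alpha\lambda_\alpha\phi_\alpha$ is locally constant; viewing $F$ as a rational function of $u^i$ with the other coordinates fixed and generic, it has no pole at $u^i=u^j$, while among the summands only those with $\alpha=i$ and $\alpha=j$ are singular there, so vanishing of the sum of their residues forces $\lambda_i/\sqrt{c_i}=\lambda_j/\sqrt{c_j}$. Letting $i\ne j$ vary gives $\lambda_\alpha=t\sqrt{c_\alpha}$ for a constant $t$, whence $\dim\ker A=1$ and $\rk A=n-1$.

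The main obstacle is the bookkeeping in Part~1: each identity is elementary, but arranging the case distinctions so that the cancellations become transparent — especially the cases where a free index coincides with $\alpha$, where $X^\alpha_{(\alpha)}=\phi_\alpha\sum_{l\ne\alpha}(u^l-u^\alpha)^{-1}$ has a different form from the other components — requires some care. Everything else is either a one-line observation (translation invariance of $\phi_\alpha$) or a routine residue computation.
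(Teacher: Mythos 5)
Your proposal is correct, and it takes the same route as the paper, which for this proposition offers nothing beyond the words ``straightforward computation'': you are simply carrying that computation out. Both halves check out — the explicit formulas $X^i_{(\alpha)}=-\phi_\alpha/(u^i-u^\alpha)$ for $i\ne\alpha$ and $X^\alpha_{(\alpha)}=\phi_\alpha\sum_{l\ne\alpha}(u^l-u^\alpha)^{-1}$ do satisfy all the off-diagonal and diagonal identities for $\nabla X_{(\alpha)}=0$, and your rank argument (column sums vanish by translation invariance, equivalently $\sum_\alpha\sqrt{c_\alpha}\,\phi_\alpha=0$ by the residue-at-infinity identity, while the residue comparison at $u^i=u^j$ forces any identically vanishing constant-coefficient combination to be proportional to $(\sqrt{c_1},\dots,\sqrt{c_n})$) correctly yields rank $n-1$. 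One small point worth making explicit: your kernel computation a priori gives the rank of the family over constants (i.e.\ the generic rank), but since the $X_{(\alpha)}$ are parallel for the flat connection $\nabla$, the pointwise rank is constant on the (connected) domain, so it equals $n-1$ at every point.
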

\begin{proof}
The proof is by a straightforward computation.
\end{proof}

Therefore, the Lauricella bi-flat F-manifolds with $\epsilon_i=-1$ are Riemannian F-manifolds with flat normal bundle.
 
\subsection{An example with the Lobachevsky hyperbolic half-plane}\label{subsection:Lobachevsky2}

The Riemann tensor of the Lobachevsky hyperbolic half-plane from Section~\ref{subsection:Lobachevsky1} is given by $R^{12}_{12}=1$ and thus condition~\eqref{qexp} is satisfied with $\alpha=1$, $\eps_1=-1$, and $X_{(1)}=e$. Since $\nabla e=0$ condition~\eqref{sym} is also satisfied. Therefore, the Lobachevsky hyperbolic half-plane is a Riemannian F-manifold with flat normal bundle. The associated non-local Hamiltonian operator given by Theorem~\ref{theorem:Fer91} is
\begin{align*}
(P^{ij})=&\begin{pmatrix}
\frac{(u^1-u^2)^2}{2} & 0\\
0 & \frac{(u^1-u^2)^2}{2}
\end{pmatrix}
\frac{d}{dx}
+\begin{pmatrix}
\frac{u^1-u^2}{2}(u^1_x-u^2_x) & \frac{u^1-u^2}{2}(u^1_x+u^2_x) \\
-\frac{u^1-u^2}{2}(u^1_x+u^2_x) & \frac{u^1-u^2}{2}(u^1_x-u^2_x)
\end{pmatrix}\\
&-\begin{pmatrix}
u^1_x\left(\frac{d}{dx}\right)^{-1} u^1_x & u^1_x\left(\frac{d}{dx}\right)^{-1} u^2_x \\
u^2_x\left(\frac{d}{dx}\right)^{-1} u^1_x & u^2_x\left(\frac{d}{dx}\right)^{-1} u^2_x
\end{pmatrix}.
\end{align*}

\section{Flat homogeneous Riemannian F-manifolds}\label{sect3}

Let $(M,\circ,g,e,E)$ be a semisimple homogeneous (pseudo-)Riemannian F-manifold with Killing unit vector field. In canonical coordinates the metric $g$ is diagonal. Let us introduce the \emph{Lam\'e coefficients} $H_i:=\sqrt{g_{ii}}$ and the \emph{Ricci rotation coefficients} $\beta_{ij}:=\f{\d_j H_i}{H_j}$, $i\ne j$. It is easy to check that they satisfy the following overdetermined system of PDEs:
\begin{align}
\label{ED1}
\d_k\beta_{ij}=&\beta_{ik}\beta_{kj}, && i\ne j\ne k\ne i,\\
\label{ED2}
e(\beta_{ij})=&0, && i\ne j,\\
\label{ED3}
E(\beta_{ij})=&-\beta_{ij}, && i\ne j,
\end{align}  
where 
\[
e=\sum_{i=1}^n\d_i,\qquad E=\sum_{i=1}^nu^i\d_i.
\]
Given a solution of the above system, the Lam\'e coefficients of the metric $g$ are obtained solving the overdetermined system of PDEs
\begin{align}
\label{ED5}
\d_jH_i=&\beta_{ij}H_j,&& i\ne j,\\
\label{ED6}
e(H_i)=&0, && \\
\label{ED7}
E(H_i)=&dH_i, && 
\end{align}
where $d$ is an eigenvalue of the matrix $V_{ij}:=(u^j-u^i)\beta_{ij}$ \cite{du93,AL}. Note that then $\cL_E g=Dg$ with $D=2d+2$.

This system has been extensively studied in \cite{AL} where it has been shown that in the case $n=3$ it is equivalent to a 2-parameter family of Painlev\'e VI.  Replacing equation \eqref{ED3} with the equation
\[
E(\beta_{ij})=(d_i-d_j-1)\beta_{ij},\quad i\ne j,
\]
and equation \eqref{ED7} with the equation
\[
E(H_i)=d_iH_i,
\]
one gets the system for general semisimple bi-flat F-manifolds \cite{Limrn}. In this case only the differences of the constants $d_i$ can be chosen arbitrarily, while $d_1$ must be an eigenvalue of the matrix $V_{ij}:=(u^j-u^i)\beta_{ij}-(d_j-d_1)\delta^i_j$. Comparing the system for generic semisimple bi-flat F-manifolds to the special case related to homogeneous Riemannian F-manifolds with Killing unit vector field it is clear that the first one reduces to the second one imposing the condition $d_1=d_2=\ldots=d_n$.

In this section we want to study solutions of the system (\eqref{ED1},\eqref{ED2},\eqref{ED3},\eqref{ED5},\eqref{ED6},\eqref{ED7}) for which the resulting metric $g$ is flat. This amounts to consider the additional conditions 
\begin{equation}
\label{ED4}
\d_i\beta_{ji}+\d_j\beta_{ij}+\sum_{k\ne i,j}\beta_{ik}\beta_{jk}=0,\quad i\ne j.
\end{equation}      
These additional conditions  are automatically fulfilled if the rotation coefficients are symmetric. This is the special case corresponding to exact homogeneous flat pencils satisfying the Egorov property, which are equivalent to Dubrovin-Frobenius manifolds. 
 
Using \eqref{ED2} and \eqref{ED3} we can reduce conditions \eqref{ED4} to a set of $\f{n(n-1)}{2}$ algebraic constraints. Indeed, from \eqref{ED2} and \eqref{ED3} we get
\[
\partial_j\beta_{ij}=\frac{1}{u^j-u^i}\left\{\sum_{k\neq i,j}(u^i-u^k)\partial_k \beta_{ij}-\beta_{ij} \right\},\quad i\ne j,
\] 
and thus \eqref{ED4} becomes
\begin{equation}\label{ED4bis}
\sum_{k\ne i,j}[(u^j-u^k)(\Delta\beta)_{ik}\beta_{jk}+(u^k-u^i)(\Delta\beta)_{jk}\beta_{ik}]=(\Delta\beta)_{ij},\quad i\ne j,
\end{equation}
where $(\Delta\beta)_{ij}:=\beta_{ij}-\beta_{ji}$. In the Egorov case $(\Delta\beta)_{ij}=0$ the above conditions are automatically satisfied. The Egorov conditions imply $\d_j(H_i^2)=\d_i(H_j^2)$ and thus (locally) the metrics with rotation coefficients $\beta_{ij}$  are potential in the coordinates $(u^1,\ldots,u^n)$. The Egorov conditions are not invariant with respect to transformations $u^i\to\tilde{u}^i=\varphi^i(u^i)$ preserving the diagonal form of the metric. Metrics that are potential for a suitable choice of $\tilde{u}^i$ satisfy the weaker conditions \cite{Darboux,Egorov}
\begin{equation}\label{potentiality}
\beta_{ij}\beta_{jk}\beta_{ki}=\beta_{ji}\beta_{ik}\beta_{kj},\quad i\ne j\ne k\ne i.
\end{equation}
      
\subsection{The three-dimensional case}\label{subsection:n=3 case}

The case $n=2$ is trivial. In this case condition \eqref{ED4bis} reduces to $(\Delta\beta)_{ij}=0$. Let us consider the case $n=3$. Following \cite{AL} we can reduce the system of partial differential equations (\ref{ED1},\ref{ED2},\ref{ED3}) to a system of ODEs. Indeed, equations~\eqref{ED2} and~\eqref{ED3} tell us that the functions $\beta_{ij}$ are homogeneous functions of degree $-1$ which depend only on the differences of the coordinates. This means that we can write
\begin{align*}
&\beta_{12}=\f{1}{u^2-u^1}F_{12}\left(\f{u^3-u^1}{u^2-u^1}\right),&&\beta_{21}=\f{1}{u^2-u^1}F_{21}\left(\f{u^3-u^1}{u^2-u^1}\right),\\
&\beta_{13}=\f{1}{u^3-u^1}F_{13}\left(\f{u^3-u^1}{u^2-u^1}\right),&&\beta_{31}=\f{1}{u^3-u^1}F_{31}\left(\f{u^3-u^1}{u^2-u^1}\right),\\
&\beta_{23}=\f{1}{u^3-u^2}F_{23}\left(\f{u^3-u^1}{u^2-u^1}\right),&&\beta_{32}=\f{1}{u^3-u^2}F_{32}\left(\f{u^3-u^1}{u^2-u^1}\right).
\end{align*}  
Introducing the variable  $z=\f{u^3-u^1}{u^2-u^1}$ we get the system       
\begin{align*}
&\f{dF_{12}}{dz}=\f{1}{z(z-1)}F_{13}F_{32},&&  \f{dF_{21}}{dz}=\f{1}{z(z-1)}F_{23}F_{31},\\
&\f{dF_{13}}{dz}=-\f{1}{z-1}F_{12}F_{23},  &&  \f{dF_{31}}{dz}=-\f{1}{z-1}F_{32}F_{21},\\
&\f{dF_{23}}{dz}=\f{1}{z}F_{21}F_{13},     &&  \f{dF_{32}}{dz}=\f{1}{z}F_{31}F_{12}.
\end{align*}
The above system admits two first integrals $I_1,I_2$ given by $\det(V+\lambda\,\Id)=\lambda^3+\lambda I_1+I_2$ or, explicitly,
\[
I_1=F_{12}F_{21}+F_{13}F_{31}+F_{23}F_{32},\qquad I_2=F_{13}F_{32}F_{21}-F_{23}F_{31}F_{12},
\]
and
\[V=
\begin{bmatrix}	
0 & F_{12} & F_{13} \cr
-F_{21} & 0 & F_{23}\cr
-F_{31} & -F_{32} & 0
\end{bmatrix}.
\]
The constraint \eqref{ED4bis} reads
\begin{eqnarray*}
&&(u^2-u^3)(\Delta\beta)_{13}\beta_{23}+(u^3-u^1)(\Delta\beta)_{23}\beta_{13}-(\Delta\beta)_{12}=0,\\
&&(u^1-u^3)(\Delta\beta)_{12}\beta_{31}+(u^2-u^1)(\Delta\beta)_{13}\beta_{21}-(\Delta\beta)_{23}=0,\\  
&&(u^2-u^1)(\Delta\beta)_{23}\beta_{12}+(u^2-u^3)(\Delta\beta)_{12}\beta_{32}+(\Delta\beta)_{13}=0,
\end{eqnarray*}
or, in terms of the functions $F_{ij}$,
\begin{align*}
&I_3=(z^2-z)(\Delta F)_{12}+(z-1)F_{23}(\Delta F)_{13}-zF_{13}(\Delta F_{23})=0,\\
&I_4=(z^2-z)F_{31}(\Delta F)_{12}+(1-z)F_{21}(\Delta F)_{13}+z(\Delta F)_{23}=0,\\
&I_5=(z^2-z)F_{32}(\Delta F)_{12}+(1-z)(\Delta F)_{13}-zF_{12}(\Delta F)_{23}=0,
\end{align*}
where $(\Delta F)_{ij}:=F_{ij}-F_{ji}$. This can also be written in the matrix form as
\[
W\begin{bmatrix}	
(\Delta F)_{12} \cr
(\Delta F)_{13} \cr
(\Delta F)_{23}
\end{bmatrix}
=
\begin{bmatrix}	
0 \cr
0 \cr
0
\end{bmatrix},\quad \text{where}\quad W=\begin{bmatrix}	
z^2-z & (z-1)F_{23} & -zF_{13} \cr
(z^2-z)F_{31} & (1-z)F_{21} & z\cr
(z^2-z)F_{32} & 1-z & -zF_{12}
\end{bmatrix}.
\]
It is easy to check that
\[
\det W=z^2(z-1)^2(I_1-I_2+1).
\]
Here we have to distinguish two cases:
\begin{enumerate}
\item \emph{The Egorov case}: $(\Delta\beta)_{ij}=0$ for all $i\ne j$. In this case the constraints $I_3,I_4,I_5$ are automatically satisfied.
\item \emph{The non-Egorov case}: $(\Delta\beta)_{ij}\ne 0$ for some $i\ne j$. In this case the determinant of the matrix of the system must vanish. As a consequence, the values of the two first integrals cannot be chosen independently: 
\[
I_1=I_2-1.
\]
Using this relation and the fact that the vector $((\Delta F)_{12} ,(\Delta F)_{13},(\Delta F)_{23})^t$ belongs to the kernel of $W$ it is possible to prove that the constraints $I_3=0,I_4=0,I_5=0$ are compatible with the system of ODEs for the functions $F_{ij}$. In other words, this system can be reduced to the set defined by the algebraic system $I_1=q-1,I_2=q,I_3=0,I_4=0,I_5=0$, where $q$ is a constant. Since the functions $I_i$ are, at a generic point, functionally independent, it would be reasonable to expect that the system for the functions $F_{ij}$ reduces to a single first order ODE on this set. Remarkably, this is not the case due to a jump in the rank of the Jacobian of the functions $I_i$ on this set. 
\end{enumerate}
    
\subsubsection{An example: the case $q=0$}\label{q=0}    
Let us consider for instance the metrics satisfying the potentiality condition~\eqref{potentiality}. In this case $q=0$ and the solution of the algebraic system $I_1=-1,I_2=0,I_3=0,I_4=0,I_5=0$ (obtained using the computational software \textsf{Maple}) is given by
\begin{align*}
 F_{12}=&\f{(z-1)(F_{21}^3F_{31}^2+F_{21}F_{31}^4-F_{21}^3+F_{21}F_{31}^2)-F_{21}(F_{21}^2+F_{31}^2)+\delta((z-1)F_{21}^2F_{31}+F_{31}^3)}{z(F_{21}^2+F_{31}^2)^2},\\
 F_{32}=&\f{-F_{31}(z-1)(-F_{21}^5+F_{21}^3 F_{31}^2+2 F_{21}F_{31}^4-F_{21}^3+3 F_{21}F_{31}^2)}{z(-F_{21}^4-F_{21}^2F_{31}^2-F_{21}^2+F_{31}^2)(F_{21}^2+F_{31}^2)}\\
 &+\f{-F_{31}(z-1)\delta(F_{21}^4 F_{31}+ F_{21}^2 F_{31}^3+3 F_{21}^2 F_{31}-F_{31}^3)}{z(-F_{21}^4-F_{21}^2F_{31}^2+2\delta F_{21}F_{31}-F_{21}^2+F_{31}^2)(F_{21}^2+F_{31}^2)},\\
 F_{23}=&\f{(z-1)(3 F_{21}^4 F_{31}+3F_{21}^2F_{31}^3+3F_{21}^2F_{31}-F_{31}^3)F_{21}}{(-F_{21}^4-F_{21}^2F_{31}^2+2\delta F_{21}F_{31}-F_{21}^2+F_{31}^2)(F_{21}^2+F_{31}^2)}\\
 &+\f{(z-1)\delta F_{21}(F_{21}^5+ F_{21}^3 F_{31}^2+ F_{21}^3-3F_{21}F_{31}^2)F_{21}}{(-F_{21}^4-F_{21}^2F_{31}^2+2\delta F_{21}F_{31}-F_{21}^2+F_{31}^2)(F_{21}^2+F_{31}^2)},  
\end{align*}     
where $\delta=\sqrt{-F_{31}^2-F_{21}^2-1}$ and the function $F_{13}$ is obtained substituting the previous expressions in
\[
F_{13}=\f{F_{31}F_{23}F_{12}}{F_{21}F_{32}}.
\]
Moreover, on this set the Jacobian of the functions $I_i$ has rank $4$. 

The reduced system is
\begin{align*}
\f{dF_{21}}{dz}=&\f{F_{21}F_{31}\Big[3 F_{21}^4 F_{31}+3 F_{21}^2F_{31}^3+3F_{21}^2F_{31}-F_{31}^3+\delta(F_{21}^5+ F_{21}^3 F_{31}^2+ F_{21}^3-3 F_{21}F_{31}^2)\Big]}{z(-F_{21}^4-F_{21}^2F_{31}^2+2\delta F_{21}F_{31}-F_{21}^2+F_{31}^2)(F_{21}^2+F_{31}^2)},\\
\f{dF_{31}}{dz}=& \f{\big[F_{21}^5-F_{21}^3F_{31}^2-2F_{21}F_{31}^4+F_{21}^3-3F_{21}F_{31}^2+\delta(- F_{21}^4 F_{31}- F_{21}^2F_{31}^3-3 F_{21}^2F_{31}+ F_{31}^3)\big]F_{31}F_{21}}{z(F_{21}^4+F_{21}^2F_{31}^2-2\delta F_{21}F_{31}+F_{21}^2-F_{31}^2)(F_{21}^2+F_{31}^2)}.
\end{align*} 
The general solution is 
\[
F_{21}=-\f{1}{az+b},\qquad F_{31}=-\f{az}{(az+b)\sqrt{-b^2-1}}.
\]
Starting from this solution we get 
\begin{align*}
&F_{21}=\f{(u^2-u^1)}{(au^1-au^3+bu^1-bu^2)},&& F_{31}=-\f{a(u^1-u^3)}{(au^1-au^3+bu^1-bu^2)\sqrt{-b^2-1}},\\
&F_{12}=\f{b(a+b)(u^1-u^2)}{(au^1-au^3+bu^1-bu^2)},&& F_{13}=\f{(u^1-u^3)(a+b)\sqrt{-b^2-1}}{(au^1-au^3+bu^1-bu^2)},\\
&F_{32}=\f{(u^3-u^2)ab}{\sqrt{-b^2-1}(au^1-au^3+bu^1-bu^2)},&& F_{23}=\f{(u^3-u^2)\sqrt{-b^2-1}}{(au^1-au^3+bu^1-bu^2)}.
\end{align*}
The matrix $V$ in this case has eigenvalues $1,0,-1$. This means that if $d=1,0,-1$, then the overdetermined system for the Lam\'e coefficients (\ref{ED5},\ref{ED6},\ref{ED7}) admits solutions. For $d=-1$ we get
\begin{eqnarray*}
H_1&=& \f{1}{(a+b)u^1-au^3-bu^2},\\
H_2&=& -\f{1}{(a+b)((u^1-u^2)b+a(u^1-u^3))},\\ 
H_3&=& -\f{a}{\sqrt{-b^2-1}(a+b)((u^1-u^2)b+a(u^1-u^3))}.
\end{eqnarray*}
For $d=0$ we get
\begin{eqnarray*}
H_1&=&\f{(u^2-u^3)}{((a+b)u^1-au^3-bu^2)},\\
H_2&=& \f{(u^3-u^1)}{((u^1-u^2)b+a(u^1-u^3))b},\\ 
H_3&=&\f{(u^1-u^2)}{((a+b)u^1-au^3-bu^2)\sqrt{-b^2-1}},
\end{eqnarray*}
and for $d=1$ we get
\begin{eqnarray*}
H_1&=&-\f{((a+b)(u^1)^2-2(au^3+bu^2)u^1+a(u^3)^2+b(u^2)^2)}{((a+b)u^1-au^3-bu^2)},\\
H_2&=&-\f{((a+b)(u^1)^2-2u^2(a+b)u^1+(2u^2u^3-(u^3)^2)a+b(u^2)^2)}{((a+b)u^1-au^3-bu^2)(a+b)},\\  
H_3&=& -\f{((a+b)(u^1)^2-2u^3(a+b)u^1+a(u^3)^2-bu^2(u^2-2u^3))a}{\sqrt{-b^2-1}((a+b)u^1-au^3-bu^2)(a+b)}.
\end{eqnarray*}
Solutions corresponding to generic values of $q$ can be treated in a similar way but the computations are much more involved.

\begin{remark}
In the case $q=0$ the rotation coefficients satisfy the potentiality condition \eqref{potentiality}. This means that there exist coordinates $(\tilde{u}^1,...,\tilde{u}^n)$
  related to canonical coordinates $(u^1,...,u^n)$ by $\tilde{u}^i=\varphi^i(u^i)$ (for some functions $\varphi^i$) reducing the metric to the potential form. 
In other words this is the closest case to Dubrovin-Frobenius manifolds.
\end{remark}

\section{Exact homogeneous flat pencils of metrics}\label{sect4}

\begin{definition}
A pair of contravariant metrics $(g_1,g_2)$ defines a \emph{flat pencil of metrics} $g_2-\lambda g_1$ if and only if the following conditions are satisfied:
\begin{itemize}
\item the metric $g_2-\lambda g_1$ is flat for any $\lambda$;
\item the contravariant Christoffel symbols $\Gamma^{ij}_{(\lambda)k}$ of the pencil are the pencil of the contravariant Christoffel symbols:
\[
\Gamma^{ij}_{(\lambda)k}=\Gamma^{ij}_{(2)k}-\lambda\Gamma^{ij}_{(1)k}.
\]
\end{itemize}
\end{definition}

Flat pencils of contravariant metrics play a crucial role in the theory of Dubrovin-Frobenius manifolds. Dubrovin proved that any Dubrovin-Frobenius manifold defines a flat pencil of contravariant metrics~(\cite{du93}). In this case $g_1$ coincides with the inverse of the covariant invariant metric $\eta$ of the Dubrovin-Frobenius manifold and $g_2$ is the intersection form defined by 
\[
g_2^{ij}:=\eta^{il}c^j_{lk}E^k,
\]
where $c^j_{lk}$ are the structure constants of the product and $E^k$ are the components of the Euler vector field. Vice versa, Dubrovin showed how to construct a Dubrovin-Frobenius manifold starting from a flat pencil of metrics satisfying the following three additional properties:
\begin{itemize}
\item \emph{Exactness}: there exists a vector field $e$ such that
\[
\cL_e g_2=g_1,\qquad \cL_e g_1=0.
\]
\item \emph{Homogeneity}: 
\[
\cL_E g_2=(d-1)g_2,
\]
where $E^i:=g_2^{il}(g_1)_{lj}e^j$.
\item \emph{Egorov property}: locally there exists a function $\tau$ such that
\[
e^i=g^{is}_1\d_s\tau,\qquad E^i=g^{is}_2\d_s\tau.
\]
\end{itemize}
\begin{remark}
Exactness implies that $[e,E]=e$ and combining this with the homogeneity condition we get
\[
\cL_E g_1=\cL_E\cL_e g_2=\cL_e\cL_E g_2-\cL_{[E,e]} g_2=(d-2)g_1.
\]
\end{remark}
\begin{remark}
In the case of Dubrovin-Frobenius manifolds the vector fields $e$ and $E$ coincide with the unit vector field and the Euler vector field, respectively.
\end{remark}

\subsection{From flat pencils of metrics to homogeneous Riemannian F-manifolds} 

\begin{theorem}\label{theorem:from pencil to Riemannian F-manifold}
Let $g_{\lambda}=g-\lambda\eta$ be an exact homogeneous flat pencil of metrics such that the operator $R$ defined by
$$
R^i_j:=\nabla_{(1)j}E^i-\nabla_{(2)j}E^i
$$
is invertible, where $\nabla_{(1)}$ and $\nabla_{(2)}$ are the Levi-Civita connections corresponding to the metrics~$\eta$ and~$g$, respectively, and we recall that $E^i=g^{il}\eta_{lj}e^j$. Then the data $(\circ,\eta,e,E)$, where the product~$\circ$ is defined by the structure constants
\[
c^j_{hk}:=L^s_h\left(\Gamma^{(1)l}_{sk}-\Gamma^{(2)l}_{sk}\right)(R^{-1})^j_l,\qquad L^s_h:=g^{sm}\eta_{mh},
\]
defines a homogeneous flat (pseudo-)Riemannian F-manifold with Killing unit vector field.
\end{theorem}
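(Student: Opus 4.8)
The plan is to verify directly that $(M,\circ,\eta,e,E)$ satisfies the axioms of a homogeneous (pseudo-)Riemannian F-manifold with Killing unit vector field, exploiting two simplifications available from the start. First, a member of a flat pencil is flat, so $\eta$ is flat, its Riemann tensor vanishes, and condition~\eqref{eqRcRcRc=0} holds trivially (this is also what makes the resulting Riemannian F-manifold \emph{flat} in the sense of Section~\ref{sect3}). Second, once the Riemannian F-manifold structure is in place, the homogeneous version of Theorem~\ref{thm1} shows that the associated flat connection automatically satisfies $\nabla\nabla E=0$, so no extra work is needed for that part of the conclusion. Hence it remains to check: (a) $\circ$ is commutative and associative with unit $e$; (b) $\eta$ is invariant, $\eta_{iq}c^q_{lp}=\eta_{lq}c^q_{ip}$; (c) $\cL_e\eta=0$; (d) $\cL_E\circ=\circ$ and $\cL_E\eta=D\eta$ for a constant $D$.

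I would first record the elementary identities forced by the definitions. Write $\Delta^l_{sk}:=\Gamma^{(1)l}_{sk}-\Gamma^{(2)l}_{sk}$ for the tensorial difference of the two Levi-Civita connections and $L^s_h:=g^{sm}\eta_{mh}$ for the recursion operator. Then $R^i_j=\Delta^i_{jl}E^l$ directly from the definition of $R$, and $L^s_he^h=E^s$ since $E^i=g^{il}\eta_{lj}e^j$. Contracting the defining formula for $c$ gives $c^j_{hk}E^k=L^s_h\Delta^l_{sk}E^k(R^{-1})^j_l=L^s_hR^l_s(R^{-1})^j_l=L^j_h$ and $c^j_{hk}e^h=E^s\Delta^l_{sk}(R^{-1})^j_l=R^l_k(R^{-1})^j_l=\delta^j_k$; the second identity says precisely that $e$ is a left unit (hence two-sided once commutativity is known), and the first reproduces the expected relation $g^{ij}=\eta^{il}c^j_{lk}E^k$ identifying $g$ with the intersection form of $\circ$. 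For commutativity and invariance I would invoke the standard reformulation of the flat-pencil axioms (see \cite{du98}): in flat coordinates for $\eta$, flatness of $g-\lambda\eta$ for all $\lambda$ is equivalent to flatness of $g$ together with a symmetry condition on the contravariant Christoffel symbols of $g$ and an associated integrability condition. The symmetry condition translates, after raising and lowering indices with $g$ and $\eta$, into the symmetry of $L^s_h\Delta^l_{sk}$ in $h,k$, which is exactly $c^j_{hk}=c^j_{kh}$; combined with the exactness relation $[e,E]=e$, which constrains $E=Le$ relative to the flat structure of $\eta$, it yields the symmetry of $\eta_{iq}c^q_{lp}$ in $i,l$, i.e. the invariance of $\eta$.

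Properties (c) and (d) are then quick. Part (c), $\cL_e\eta=0$, is the exactness relation $\cL_e g_1=0$ (equivalently for the inverse metric). For (d): a constant rescaling is a homothety and preserves the Levi-Civita connection, so $\cL_E g=(d-1)g$ and the derived $\cL_E\eta=(d-2)\eta$ give $\cL_E\Gamma^{(1)}=\cL_E\Gamma^{(2)}=0$, hence $\cL_E\Delta=0$; moreover $\cL_E L=L$ (contributions $(d-1)$ from $g$ and $-(d-2)$ from $\eta$) and $\cL_E R=\cL_E(\Delta(\cdot,E))=0$ because $\cL_E E=0$; combining these, $\cL_E c=c$, i.e. $\cL_E\circ=\circ$, while passing to the covariant invariant metric gives the homogeneity condition with $D=2-d$.

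The main obstacle is associativity of $\circ$. I would use the fact that, for a commutative unital product, associativity is equivalent to the commutativity of the multiplication operators $U_k$ by the coordinate fields, $(U_k)^j_h=c^j_{hk}$. Writing $U_k=L\,D_k\,R^{-1}$ with $D_k$ the endomorphism $X\mapsto\Delta(X,\partial_k)$, and using invertibility of $L$ and $R$, the condition $[U_h,U_k]=0$ becomes the tensorial identity $D_h\,(R^{-1}L)\,D_k=D_k\,(R^{-1}L)\,D_h$ for all $h,k$. I expect to prove this by differentiating $R=\Delta(\cdot,E)$ along coordinate fields and feeding in the full flatness of the pencil: flatness of both $\eta$ and $g$ gives (in flat coordinates for $\eta$) the identity $\d_k\Delta^h_{ij}-\d_j\Delta^h_{ik}=\Delta^h_{mk}\Delta^m_{ij}-\Delta^h_{mj}\Delta^m_{ik}$, the integrability condition from the pencil being flat for all $\lambda$ supplies the remaining first-order relations, and exactness/homogeneity ($[e,E]=e$, $\cL_E g=(d-1)g$) control the $E$-contractions. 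This is the non-Egorov counterpart of Dubrovin's derivation of the WDVV/associativity equations from a flat pencil; no input beyond the flat-pencil axioms is needed, but without the Egorov simplification $(\Delta\beta)_{ij}=0$ the bookkeeping is considerably longer.
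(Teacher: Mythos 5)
Your overall strategy --- verify the axioms one by one, with the curvature condition \eqref{eqRcRcRc=0} trivially satisfied because $\eta$ is flat, $\cL_e\eta=0$ and the homogeneity of $\eta$ read off directly from the exactness and homogeneity of the pencil, and $\cL_E\circ=\circ$ obtained from $\cL_E\Delta=0$, $\cL_E L=L$, $\cL_E R=0$ --- is the same as the paper's, and those parts of your argument are correct. The unit computation $c^j_{hk}e^h=R^l_k(R^{-1})^j_l=\delta^j_k$ and the identification $c^j_{hk}E^k=L^j_h$ also match the paper, and your reduction of associativity (given commutativity) to the commutation of the multiplication operators is sound.

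The gap is in the two algebraic properties that carry the actual content of the theorem: associativity and invariance of $\eta$. For associativity you reduce correctly to $D_h(R^{-1}L)D_k=D_k(R^{-1}L)D_h$, but you only announce that you ``expect'' to prove this by differentiating $R$ and feeding in flatness of the pencil; that derivation is the heart of the proof and is not carried out. The paper closes it by importing from \cite[Lemmas~2.2 and~2.6]{du98} the quadratic identity $\Delta^{ij}_s\Delta^{sk}_l=\Delta^{ik}_s\Delta^{sj}_l$ (equation \eqref{eq:third identity}) for the tensor $\Delta^{jk}_m=L^s_m\eta^{jt}\bigl(\Gamma^{(1)k}_{st}-\Gamma^{(2)k}_{st}\bigr)$, deducing from it the commutation rule $\Delta^{tl}_k(R^{-1})^s_l=\Delta^{sl}_k(R^{-1})^t_l$ (Lemma~\ref{lemma:R and c}), and then verifying $R^l_i c^i_{sj}c^s_{hk}=R^l_i c^i_{sk}c^s_{hj}$ by alternating these two identities; this is exactly the input you need, available off the shelf, so there is no reason to rederive it from curvature computations. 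Second, your claim that invariance of $\eta$ follows from the symmetry condition ``combined with the exactness relation $[e,E]=e$'' does not go through as stated: writing $\eta_{sj}c^j_{hk}=\eta_{sj}\Delta^{ql}_h\eta_{qk}(R^{-1})^j_l$ as in \eqref{eq:c from Delta and R}, the index $j$ you need to move sits on $R^{-1}$, not on $\Delta$, so before the symmetry identity \eqref{eq:first identity-equiv} can be applied you must first commute $R^{-1}$ past $\Delta$ --- that is, you again need Lemma~\ref{lemma:R and c}, hence the same quadratic identity, and exactness plays no role in this step. Both gaps are filled by the same one-line citation of Dubrovin's lemmas.
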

\begin{proof}
To prove the theorem we need some results from \cite{du98}. More precisely, let us introduce the tensor field
\[
\Delta^{jk}_m:=L^s_m\eta^{jt}\left(\Gamma^{(1)k}_{st}-\Gamma^{(2)k}_{st}\right).
\]
The following identities hold true \cite[Lemmas 2.2 and 2.6]{du98}:
\begin{eqnarray}
\eta^{is}\Delta^{jk}_s&=&\eta^{js}\Delta^{ik}_s,\notag\\
g^{is}\Delta^{jk}_s&=&g^{js}\Delta^{ik}_s,\notag\\
\Delta^{ij}_s \Delta^{sk}_l &=& \Delta^{ik}_s \Delta^{sj}_l,\label{eq:third identity}\\
\mathcal{L}_E \Delta^{jk}_i &=& (d-1)\Delta^{jk}_i.\label{eq:fourth identity}
\end{eqnarray}
Note that from the first two identities it follows that
\begin{eqnarray}
\eta_{hs}\Delta^{sk}_m&=&\eta_{ms}\Delta^{sk}_h,\label{eq:first identity-equiv}\\
g_{hs}\Delta^{sk}_m&=&g_{ms}\Delta^{sk}_h.\label{eq:second identity-equiv}
\end{eqnarray}
Using the tensor $\Delta$, the tensors $R$ and $c$ can be expressed as follows:
\begin{align}
&R^m_s=\left(\Gamma^{(1)m}_{sl}-\Gamma^{(2)m}_{sl}\right)E^l=g_{sr}\eta^{rq}\Delta^{pm}_q\eta_{pl}E^l\stackrel{\eqref{eq:first identity-equiv}}{=}g_{sp}\Delta^{pm}_lE^l,\notag\\
&c^j_{hk}=\Delta^{ml}_h \eta_{mk}(R^{-1})^j_l.\label{eq:c from Delta and R}
\end{align}

\begin{lemma}\label{lemma:R and c}
The following identity holds true:
\[
\Delta^{tl}_k(R^{-1})^s_l=\Delta^{sl}_k(R^{-1})^t_l.
\]
\end{lemma}
\begin{proof}
Equivalently, we have to prove that $\Delta^{sh}_kR^m_s=\Delta^{sm}_kR^h_s$, for which we compute
\[
\Delta^{sh}_kR^m_s=\Delta^{sh}_k g_{sp}\Delta^{pm}_lE^l\stackrel{\eqref{eq:second identity-equiv}}{=}\Delta^{sh}_p g_{sk}\Delta^{pm}_lE^l\stackrel{\eqref{eq:third identity}}{=}\Delta^{sm}_p g_{sk}\Delta^{ph}_lE^l\stackrel{\eqref{eq:second identity-equiv}}{=}\Delta^{sm}_k g_{sp}\Delta^{ph}_lE^l=\Delta^{sm}_k R^h_s,
\]
as required.
\end{proof}

In order to prove the theorem, we need to prove the following:
\begin{itemize}
\item The product is commutative:
\[
c^j_{hk}=\Delta^{ml}_h\eta_{mk}(R^{-1})^j_l\stackrel{\eqref{eq:first identity-equiv}}{=}\Delta^{ml}_k\eta_{mh}(R^{-1})^j_l=c^j_{kh}.
\]
\item The product is associative:
\begin{align*}
R^l_i c_{sj}^ic^s_{hk}=&\Delta_s^{ql}\eta_{qj}(R^{-1})^s_r\Delta^{mr}_h\eta_{mk}\stackrel{\text{Lemma~\ref{lemma:R and c}}}{=}\Delta_s^{ql}\eta_{qj}(R^{-1})^m_r\Delta^{sr}_h\eta_{mk}\stackrel{\eqref{eq:third identity}}{=}\\
=&\Delta_s^{qr}\eta_{qj}(R^{-1})^m_r\Delta^{sl}_h\eta_{mk}\stackrel{\text{Lemma~\ref{lemma:R and c}}}{=}\Delta_s^{mr}\eta_{qj}(R^{-1})^q_r\Delta^{sl}_h\eta_{mk}\stackrel{\eqref{eq:third identity}}{=}\\
=&\Delta_s^{ml}\eta_{qj}(R^{-1})^q_r\Delta^{sr}_h\eta_{mk}\stackrel{\text{Lemma~\ref{lemma:R and c}}}{=}\Delta_s^{ml}\eta_{qj}(R^{-1})^s_r\Delta^{qr}_h\eta_{mk}=R^l_i c_{sk}^ic^s_{hj}.
\end{align*}
\item The vector field $e$ is the unit of the product:
\[
c^j_{hk}e^h=L^s_h\left(\Gamma^{(1)l}_{sk}-\Gamma^{(2)l}_{sk}\right)(R^{-1})^j_l e^h=E^s\left(\Gamma^{(1)l}_{sk}-\Gamma^{(2)l}_{sk}\right)(R^{-1})^j_l=R^l_k(R^{-1})^j_l=\delta^j_k.
\]
\item The metric $\eta$ is invariant with respect to the product:
\begin{align*}
\eta_{sj}c^j_{hk}=&\eta_{sj}\Delta^{ql}_h\eta_{qk}(R^{-1})^j_l\stackrel{\text{Lemma~\ref{lemma:R and c}}}{=}\eta_{sj}\Delta^{jl}_h\eta_{qk}(R^{-1})^q_l\stackrel{\eqref{eq:first identity-equiv}}{=}\eta_{hj}\Delta^{jl}_s\eta_{qk}(R^{-1})^q_l\stackrel{\text{Lemma~\ref{lemma:R and c}}}{=}\\
=&\eta_{hj}\Delta^{ql}_s\eta_{qk}(R^{-1})^j_l=\eta_{hj}c^j_{sk}.
\end{align*}
\item $\mathcal{L}_E\circ=\circ$. This is true, because equation~\eqref{eq:fourth identity} first implies that $\mathcal{L}_E R^i_j=0$, and then using~\eqref{eq:c from Delta and R} it gives that $\mathcal{L}_E c^j_{hk}=c^j_{hk}$, since $\mathcal{L}_E\eta_{ij}=(2-d)\eta_{ij}$.
\end{itemize}
\end{proof}

\begin{remark}
It is easy to check that the affinor $L$ coincides with the operator of multiplication by the Euler vector field. Indeed we have
\[c^j_{hk}E^h=g^{ms}\Delta^l_{sh}E^h\eta_{mk}(R^{-1})^j_l=
g^{ms}R^l_s\eta_{mk}(R^{-1})^j_l
=g^{ms}\eta_{mk}\delta^j_s=g^{mj}\eta_{mk}=L^j_k.\]
\end{remark}

\begin{remark}
It is is easy to prove that 
\[
R^i_j=\f{d-1}{2}\delta^i_j+\nabla_{(1)j}E^i+\frac{1}{2}g^{is}d\theta_{sj}.
\] 
Indeed,
\begin{align*}
\nabla_{(2)i}E^k=&\d_iE^k+\frac{1}{2}g^{ks}\left(\partial_i g_{js}E^j+E^j\partial_j g_{si}-\partial_s g_{ij}E^j\right)=\\
=&\d_iE^k+\frac{1}{2}g^{ks}\left(\partial_i g_{js}E^j-g_{sj}\d_iE^j-g_{ij}\d_sE^j+(1-d)g_{si}-\partial_s g_{ij}E^j\right)=\\
=&\d_iE^k+\frac{1}{2}\left(g^{ks}\partial_i g_{js}E^j-\d_iE^k-g^{ks}\d_s(g_{ij}E^j)+(1-d)\delta^k_i\right)=\\
=&\frac{1}{2}\d_iE^k+\frac{1}{2}\left(g^{ks}\partial_i g_{js}E^j-g^{ks}\d_s(g_{ij}E^j)\right)+\frac{1-d}{2}\delta^k_i=\\
=&\frac{1}{2}\d_iE^k+\frac{1}{2}\left(g^{ks}\partial_i g_{js}E^j-g^{ks}\d_s\theta_i\right)+\frac{1-d}{2}\delta^k_i=\\
=&\frac{1}{2}\d_iE^k+\frac{1}{2}\left(g^{ks}\partial_i(g_{js}E^j)-g^{ks}g_{js}\partial_iE^j-g^{ks}\d_s\theta_i\right)+\frac{1-d}{2}\delta^k_i=\\
=&\frac{1}{2}\left(g^{ks}\partial_i\theta_s-g^{ks}\d_s\theta_i\right)+\frac{1-d}{2}\delta^k_i=\\
=&\frac{1}{2}g^{ks}d\theta_{is}+\frac{1-d}{2}\delta^k_i.
\end{align*}
In the Egorov case $d\theta=0$ this formula reduces to Dubrovin's formula. 
\end{remark}

\subsection{The semisimple case}

Let us analyze what Theorem~\ref{theorem:from pencil to Riemannian F-manifold} gives us in the case of a semisimple flat pencil of metrics. Semisimplicity means that there are special coordinates $u^1,\ldots,u^n$ such that
\[
\eta^{ij}=f^i\delta^i_j,\qquad g^{ij}=f^i u^i\delta^i_j.
\]
It is easy to check that the pencil is semisimple if and only if the eigenvalues of $L$ are functionally independent, and then these eigenvalues can be taken as coordinates $u^1,\ldots,u^n$. Using exactness it is easy to check that in the coordinates $u^1,\ldots,u^n$ the unit vector field and the Euler vector field read
\[
e^i=1,\qquad\,E^i=u^i,\qquad i=1,\ldots,n.
\] 
Equations~\eqref{eq:first identity-equiv} and~\eqref{eq:second identity-equiv} imply that $\Delta_i^{jk}=0$ for $i\ne j$, and then we compute
$$
\Delta^{jk}_j=
\begin{cases}
\frac{u^j-u^k}{2}\frac{f^k\d_k f^j}{f^j},&\text{if $k\ne j$},\\
\frac{f^j}{2},&\text{if $j=k$},
\end{cases}
\qquad\qquad
R^k_j=\frac{1}{f^j}\Delta^{jk}_j=
\begin{cases}
\frac{u^j-u^k}{2}\frac{f^k\d_k f^j}{(f^j)^2},&\text{if $k\ne j$},\\
\frac{1}{2},&\text{if $j=k$}.
\end{cases}
$$ 
From~\eqref{eq:c from Delta and R}, under the assumption $\det R\ne 0$, we immediately get that $c^i_{jk}=\delta^i_j\delta^i_k$.

Actually, in the semisimple case we don't need the assumption $\det R\ne 0$: we can directly define
\[
c^i_{jk}:=\delta^i_j\delta^i_k.
\]
This product is clearly compatible with the metric $\eta$ and satisfies the condition $\cL_E\circ=\circ$. Thus, the data $(\circ,\eta,e,E)$ defines a homogeneous flat Riemannian F-manifold with Killing unit vector field.

We conclude that an arbitrary semisimple exact homogeneous flat pencil of metrics gives a homogeneous flat Riemannian F-manifold with Killing unit vector field. The converse statement is clearly not true since the existence of a second compatible flat metric requires the fulfillment of the additional constraints \cite{Mokhov}
\[
u^i\d_i\beta_{ji}+u^j\d_j\beta_{ij}+\sum_{k\ne i,j}u^k\beta_{ik}\beta_{jk}=-\f{1}{2}(\beta_{ij}+\beta_{ji}),\quad i\ne j.
\]
Using the previous conditions these constraints can be replaced by the following set of $\f{n(n-1)}{2}$ constraints:
\begin{equation}\label{ED5b}
\sum_{k\ne i,j}[u^i(u^j-u^k)(\Delta\beta)_{ik}\beta_{jk}-u^j(u^i-u^k)(\Delta\beta)_{jk}\beta_{ik})]
=\f{1}{2}(u^i+u^j)(\Delta\beta)_{ij},\quad i\ne j.
\end{equation}
Summarizing, semisimple exact homogeneuos flat pencils are obtained by solutions of the system (\ref{ED1},\ref{ED2},\ref{ED3}) subject to the $n(n-1)$ constraints \eqref{ED4bis} and \eqref{ED5b}.

\subsection{Semisimple exact homogeneous flat pencils in dimension $3$}\label{exactpencil}
 
Let us consider the three-dimensional case. We need to study the same system studied in Section~\ref{subsection:n=3 case} for the functions $F_{ij}$ subject to the additional constraints~\eqref{ED5b}, which, in terms of the functions $F_{ij}$, can be written as
\begin{equation*}
\begin{bmatrix}	
Q_6 \cr
Q_7 \cr
Q_8
\end{bmatrix}=
\begin{bmatrix}
 -\f{1}{2}\f{u^1+u^2}{u^2-u^1} & -\f{u^1F_{23}}{u^3-u^1} & \f{u^2F_{13}}{u^3-u^2} \cr
 -\f{u^2F_{31}}{u^2-u^1} & \f{u^3F_{21}}{u^3-u^1} & -\f{1}{2}\f{u^2+u^3}{u^3-u^2} \cr
   -\f{u^1F_{32}}{u^2-u^1} & \f{1}{2}\f{u^3+u^1}{u^3-u^1} & \f{u^3F_{12}}{u^3-u^2}
   \end{bmatrix} 
\begin{bmatrix}	
(\Delta F)_{12} \cr
(\Delta F)_{13} \cr
(\Delta F)_{23}
\end{bmatrix}
=
\begin{bmatrix}	
0 \cr
0 \cr
0
\end{bmatrix}.   
\end{equation*}   
These can be replaced by the constraints 
\begin{eqnarray*}
I_6&=&Q_6-u^1I_3=0,\\
I_7&=&Q_7-u^2I_4=0,\\
I_8&=&Q_8-u^3I_5=0,
\end{eqnarray*}
that is
\begin{equation*}
\begin{bmatrix}	
I_6 \cr
I_7 \cr
I_8
\end{bmatrix}=
\begin{bmatrix}
 -\f{1}{2} & 0 & \f{F_{13}}{z-1} \cr
 0 & \f{F_{21}(z-1)}{z} & -\f{1}{2} \cr
   F_{32}z & -\f{1}{2} & 0
   \end{bmatrix} 
\begin{bmatrix}	
(\Delta F)_{12} \cr
(\Delta F)_{13} \cr
(\Delta F)_{23}
\end{bmatrix}
=
\begin{bmatrix}	
0 \cr
0 \cr
0
\end{bmatrix},   
\end{equation*}
where we recall that $z=\f{u^3-u^1}{u^2-u^1}$. Assuming $\beta_{ij}\ne\beta_{ji}$ for some $i\ne j$, the solution of the algebraic system $I_3=I_4=I_5=I_6=I_7=I_8=0$ is given by
\begin{align*}
&F_{21} = \f{1}{2}\f{\sqrt{z-1}}{\sqrt{-z}},&& F_{31} = \f{1}{2}\sqrt{z-1},&& F_{12} = \f{1}{2}\f{\sqrt{-z}}{\sqrt{z-1}},\\
&F_{32} = \f{1}{2}\sqrt{-z},&& F_{13} = -\f{1}{2}\sqrt{z-1},&& F_{23} = -\f{1}{2}\f{1}{\sqrt{-z}}.
\end{align*}
It is easy to check that the above functions satisfy the system of ODEs for the functions $F_{ij}$ and that the value of the first integrals $I_1$ and $I_2$ on this solution is $\f{3}{4}$ and $\f{1}{4}$, respectively. The eigenvalues of the matrix $V$ are $1$ and $-\f{1}{2}$ (with multiplicity $2$). This means that if $d=1,-\f{1}{2}$ the overdetermined system for the Lam\'e coefficients (\ref{ED5},\ref{ED6},\ref{ED7}) admits solutions. 

For $d=1$ we get
\[
H_1=c\sqrt{u^3-u^1}\sqrt{u^2-u^1},\qquad H_2=-c\sqrt{u^1-u^2}\sqrt{u^3-u^2},\qquad H_3=c\sqrt{u^2-u^3}\sqrt{u^1-u^3}.
\]
This example can be immediately generalized to arbitrary dimensions leading to the   
pair of flat diagonal metrics
\begin{equation}\label{AF}
(g_1)_{ii}=\prod_{k\ne i}(u^k-u^i),\qquad (g_2)_{ii}=\f{1}{u^i}\prod_{k\ne i}(u^k-u^i).
\end{equation}
The corresponding bi-flat F-manifold structure is the special case of Lauricella bi-flat F-manifolds corresponding to the choice $\epsilon_i=\f{1}{2}$, $i=1,\ldots,n$. The metrics of this example provide two local Hamiltonian structures of hydrodynamic type for the quasi-classical limit of coupled KdV equations \cite{AF,FP}.

For $d=-\f{1}{2}$ we get
\begin{eqnarray*}
H_1&=&\f{c_1P_{-\f{1}{2}}^1\left(\f{2u^1-u^3-u^2}{u^2-u^3}\right)+c_2Q_{-\f{1}{2}}^1\left(\f{2u^1-u^3-u^2}{u^2-u^3}\right)}{\sqrt{u^2-u^3}},\\
H_2&=&-\f{c_1}{2}\f{P_{-\f{1}{2}}^1\left(\f{2u^1-u^3-u^2}{u^2-u^3}\right)+P_{\f{1}{2}}^1\left(\f{2u^1-u^3-u^2}{u^2-u^3}\right)}{\sqrt{u^3-u^1}}
-\f{c_2}{2}\f{Q_{-\f{1}{2}}^1\left(\f{2u^1-u^3-u^2}{u^2-u^3}\right)+Q_{\f{1}{2}}^1\left(\f{2u^1-u^3-u^2}{u^2-u^3}\right)}{\sqrt{u^3-u^1}},\\ 
H_3&=&\f{c_1}{2}\f{P_{-\f{1}{2}}^1\left(\f{2u^1-u^3-u^2}{u^2-u^3}\right)-P_{\f{1}{2}}^1\left(\f{2u^1-u^3-u^2}{u^2-u^3}\right)}{\sqrt{u^3-u^1}}
+\f{c_2}{2}\f{Q_{-\f{1}{2}}^1\left(\f{2u^1-u^3-u^2}{u^2-u^3}\right)-Q_{\f{1}{2}}^1\left(\f{2u^1-u^3-u^2}{u^2-u^3}\right)}{\sqrt{u^1-u^2}},
\end{eqnarray*}
where $P^{\mu}_{\nu}(x)$ and  $Q^{\mu}_{\nu}(x)$ are Legendre functions of the first and second kind respectively, i.e. are solutions of the general Legendre equation 
\[
(1-x^{2})\,y''-2xy'+\left[\nu(\nu +1)-\frac{\mu ^{2}}{1-x^{2}}\right]\,y=0.
\]

\section{Legendre transformations}\label{section:Legendre}  

Throughout this section we will not assume that F-manifolds have unit.

\subsection{The Legendre transformation for F-manifolds with compatible connection}\label{section:Legendre cc} 
\begin{definition}
\label{defi:fmancc}
An \emph{F-manifold with compatible connection} \cite{LPR} is a manifold $M$ equipped with an associative commutative product $\circ$ and a connection $\nabla$ satisfying the following conditions: 
\begin{itemize}
\item $\nabla$ is torsionless and compatible with the product $\circ$.
\item The Riemann tensor $R$ of $\nabla$ satisfies the condition
\begin{equation}\label{shc}
R(Y,Z)(X\circ W)+R(X,Y)(Z\circ W)+R(Z,X)(Y\circ W)=0.
\end{equation}
\end{itemize}
\end{definition}

Given a semisimple F-manifold with compatible connection one can define an integrable hierarchy
\beq\label{systemHT}
u^i_t=c^i_{jk}X^j u^k_x,\quad i=1,\ldots,n,
\eeq
where the $X^j$ are components of a vector field $X$ satisfying the linear system of PDEs
\beq\label{sym2}
c^i_{jl}\nabla_k X^l=c^i_{kl}\nabla_j X^l.
\eeq
Non-trivial solutions of this system exist because of semisimplicity~\cite[Section~5]{LPR}. If the connection $\nabla$ is flat, then condition \eqref{shc} is automatically satisfied. In this case a countable set of solutions of \eqref{sym2} is obtained starting from a frame of flat vector fields $X_{(p,0)}$, $p=1,\ldots,n$, by means of the following recursive relations:
\beq
\nabla_j X^i_{(p,\alpha+1)}=c^i_{jk}X^k_{(p,\alpha)}. 
\eeq
This was called the \emph{principal hierarchy}, since in the case of Dubrovin-Frobenius manifolds it reduces to Dubrovin's principal hierarchy \cite{LPR}. 

Recall that an {\em invertible} vector field $X$ is a vector field for which there exists another vector field $Y$ such that $X\circ Y=e$. Following \cite{Stthesis,StSt}, for any invertible vector field $\oX$ solving the linear system \eqref{sym2} we can define a \emph{generalized Legendre transformation}. We will call such a vector field a \emph{Legendre vector field}.

\begin{theorem}
Let $(M,\circ,\nabla)$ be an F-manifold with compatible connection and let $\oX$ be an invertible vector field satisfying condition \eqref{sym2}. Then the data $(M,\circ,\onabla)$, where $\onabla$ is the connection defined by
\[
\onabla_Y Z:=\oX^{-1}\circ\nabla_Y(\oX\circ Z),
\]
give an F-manifold with compatible connection. 
\end{theorem}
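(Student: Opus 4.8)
\emph{Proof strategy.} Write $L:=\oX\circ$ for the operator of $\circ$-multiplication by $\oX$. Since $\oX$ is invertible, $L$ is a $C^\infty_M$-linear bundle isomorphism with inverse $L^{-1}=\oX^{-1}\circ$, and by commutativity and associativity of $\circ$ both $L$ and $L^{-1}$ commute with $\circ$-multiplication by any local vector field, i.e. $L^{\pm1}(U\circ V)=(L^{\pm1}U)\circ V$. In this notation $\onabla_Y Z=L^{-1}\bigl(\nabla_Y(LZ)\bigr)$, and since $L,L^{-1}$ are $C^\infty_M$-linear, $\onabla$ is tensorial in $Y$ and obeys the Leibniz rule in $Z$, hence is a genuine connection. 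The plan is then to verify, in this order, the defining properties of Definition~\ref{defi:fmancc}: that $\onabla$ is torsionless, that it is compatible with $\circ$, and that its Riemann tensor $\oR$ satisfies~\eqref{shc}.

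For torsionlessness I would expand $\nabla_Y(\oX\circ Z)$ by the Leibniz rule for $\nabla$ and rearrange, which gives $\onabla_Y Z=\nabla_Y Z+A(Y,Z)$ with $A$ the $(1,2)$-tensor
\[
A(Y,Z):=\oX^{-1}\circ\bigl[(\nabla_Y c)(\oX,Z)+(\nabla_Y\oX)\circ Z\bigr].
\]
Now $(\nabla_Y c)(\oX,Z)$ is symmetric under $Y\leftrightarrow Z$ because compatibility of $\nabla$ with $\circ$ means $\nabla c$ is totally symmetric, so $(\nabla_Y c)(\oX,Z)=(\nabla_{\oX}c)(Y,Z)=(\nabla_Z c)(\oX,Y)$; and $(\nabla_Y\oX)\circ Z=(\nabla_Z\oX)\circ Y$ is exactly the hypothesis~\eqref{sym2} on $\oX$. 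Hence $A(Y,Z)=A(Z,Y)$, and since $\nabla$ is torsionless so is $\onabla$. I expect this to be the only step carrying real content: it is here, and only here, that the assumption that $\oX$ solves~\eqref{sym2} is used, and without it the difference tensor $A$ would fail to be symmetric.

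For the remaining two properties the key point is that the same conjugation governs the $\lambda$-deformed connections: using $L^{-1}\bigl(Y\circ(LZ)\bigr)=\oX^{-1}\circ Y\circ\oX\circ Z=Y\circ Z$ one checks immediately that, for every $\lambda$,
\[
\onabla^{(\lambda)}_Y Z:=\onabla_Y Z+\lambda\,Y\circ Z=L^{-1}\bigl(\nabla^{(\lambda)}_Y(LZ)\bigr),
\]
i.e. $\onabla^{(\lambda)}$ is the conjugate of $\nabla^{(\lambda)}$ by the fixed bundle automorphism $L$. A short direct computation (using only that $L$ involves no differentiation) then gives $\oR^{(\lambda)}(Y,Z)(U)=\oX^{-1}\circ\bigl(R^{(\lambda)}(Y,Z)(\oX\circ U)\bigr)$ for the associated Riemann tensors. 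Since $\nabla$ is compatible with $\circ$, the tensor $R^{(\lambda)}$ does not depend on $\lambda$ (by the criterion recalled in Section~\ref{sect1}); hence neither does $\oR^{(\lambda)}$, and applying that criterion again, now to the torsionless connection $\onabla$, we conclude that $\onabla$ is compatible with $\circ$ (so that, by Hertling's theorem recalled in Section~\ref{sect1}, $M$ is an F-manifold).

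Finally, taking $\lambda=0$ gives $\oR(Y,Z)(U)=\oX^{-1}\circ\bigl(R(Y,Z)(\oX\circ U)\bigr)$. Since $\oX$ commutes with $\circ$-multiplication, the cyclic sum in~\eqref{shc} evaluated on $\oR$ equals $\oX^{-1}\circ$ applied to the cyclic sum in~\eqref{shc} evaluated on $R$ with $W$ replaced by $\oX\circ W$; the latter vanishes because $(M,\circ,\nabla)$ is an F-manifold with compatible connection, so~\eqref{shc} holds for $R$. Therefore $\onabla$ is torsionless, compatible with $\circ$, and has Riemann tensor satisfying~\eqref{shc}, which is precisely what is required for $(M,\circ,\onabla)$ to be an F-manifold with compatible connection.
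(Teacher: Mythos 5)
Your proof is correct and follows the same structure as the paper's: it establishes exactly the three facts (torsionlessness of $\onabla$, compatibility with $\circ$, and the conjugation identity $\oR(Y,Z)(W)=\oX^{-1}\circ R(Y,Z)(\oX\circ W)$) that the paper simply cites from \cite{Stthesis,StSt}. The only difference is that you actually supply the arguments — in particular the clean observation that $\onabla^{(\lambda)}=L^{-1}\nabla^{(\lambda)}L$ for all $\lambda$, which handles both compatibility and the curvature identity at once — so your write-up is a self-contained version of the proof the paper outsources to its references.
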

\begin{proof} The statement of the theorem follows from the following facts proved in \cite{Stthesis,StSt}:
\begin{itemize}
\item The connection $\onabla$ is compatible with the product $\circ$ if and only if $\nabla$ is compatible with the product $\circ$ and $\oX$ satisfies condition \eqref{sym2}.
\item The torsion of $\onabla$ vanishes as a consequence of the vanishing of the torsion of $\nabla$ and of condition \eqref{sym2}.
\item If $\oX$ satisfies condition \eqref{sym2}, then the Riemann tensor $R$ of $\nabla$ and the Riemann tensor~$\oR$ of $\onabla$ are related by the following identity:
\[
\oR(Y,Z)(W)=\oX^{-1}\circ R(Y,Z)(\oX\circ W).
\]
\end{itemize}
\end{proof}
 
\begin{remark}
If $\nabla$ is flat then $\onabla$ is flat too. Moreover, if the product $\circ$ has a unit $e$ and $\nabla\oX=0$, then $\onabla e=0$.  
\end{remark}
 
\begin{remark}
In canonical coordinates the Legendre transformation is given by the following formulas:
\begin{align*}
&\oGamma^i_{ij}=\Gamma^i_{ij}\frac{\oX^j}{\oX^i}=\Gamma^i_{ij}+\partial_j\ln{\oX^i}, && i\ne j,\\
&\oGamma^i_{jj}=\Gamma^i_{jj}\frac{\oX^j}{\oX^i}=-\Gamma^i_{ij}-\partial_j\ln{\oX^i}, && i\ne j,\\
&\oGamma^i_{ii}=\Gamma^i_{ii}+\partial_i\ln{\oX^i}, && \\
&\oGamma^i_{jk}=0,&& i\ne j\ne k\ne i,
\end{align*} 	
where we have used the fact that 
\[
\d_j\oX^i=\Gamma^i_{ij}(\oX^j-\oX^i),\quad i\ne j.
\]
\end{remark}
 
\subsection{The Legendre transformation for Riemannian F-manifolds} 
 
\begin{theorem} 
Let $(M,\circ,g,e)$ be a (pseudo-)Riemannian F-manifold with Killing unit vector field and $\nabla$ be the associated flat structure on $M$. If a Legendre vector field $\oX$ is flat, i.e. $\nabla\oX=0$, then the data $(M,\circ,e,\og)$, where $\og$ is given by
\[
\og(Y,Z):=g(\oX\circ Y,\oX\circ Z),\quad Y,Z\in\cT_M,
\]
define a new (pseudo-)Riemannian F-manifold structure with Killing unit vector field on $M$ whose associated flat structure is $\onabla$.
\end{theorem}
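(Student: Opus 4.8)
The plan is to deduce everything from Theorem~\ref{propinv2}. Combined with the uniqueness statement in Part~1 of Theorem~\ref{thm1}, that theorem reduces the problem to checking three things: (i) $(M,\circ,\onabla,e)$ is a flat F-manifold; (ii) $\og$ is an invariant metric for $\circ$; and (iii) $\og$ satisfies condition~\eqref{nablafromg} with $\nabla$ replaced by $\onabla$ and $\theta$ by the counit $\otheta$ of $\og$. Indeed, once (i)--(iii) hold, Theorem~\ref{propinv2} gives that $(M,\circ,\og,e)$ is a (pseudo-)Riemannian F-manifold with Killing unit vector field, and since $\onabla$ is torsionless (by (i)) and satisfies~\eqref{nablafromg} (by (iii)), the uniqueness in Theorem~\ref{thm1} forces $\onabla$ to be the flat structure associated with this new Riemannian F-manifold.

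Claim (i) is almost immediate: by Part~2 of Theorem~\ref{thm1}, $(M,\circ,\nabla,e)$ is a flat F-manifold, and since $\nabla\oX=0$ the vector field $\oX$ satisfies~\eqref{sym2} trivially, so the Legendre construction of Section~\ref{section:Legendre cc} applies and produces a torsionless connection $\onabla$ compatible with $\circ$; it is flat because $\nabla$ is flat, and $\onabla e=0$ because $\circ$ has a unit and $\nabla\oX=0$. For claim (ii) note first that $\og$ is symmetric and, since $g$ is non-degenerate and $\oX\circ(\,\cdot\,)$ is invertible, non-degenerate; invariance of $g$ together with commutativity and associativity of $\circ$ then give
\[
\og(X\circ Y,Z)=g\big(\oX\circ X\circ Y,\oX\circ Z\big)=g\big(\oX\circ X,\oX\circ Y\circ Z\big)=\og(X,Y\circ Z),
\]
and, using also that $e$ is the unit, $\otheta(Y)=\og(e,Y)=g(\oX,\oX\circ Y)=g(\oX\circ\oX,Y)$.

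The real content is claim (iii). Writing $L:=\oX\circ(\,\cdot\,)$, so that $\onabla_X Y=L^{-1}\nabla_X(LY)$ and $\og(Y,Z)=g(LY,LZ)$, one expands $X(\og(Y,Z))$ by the Leibniz rule for $\nabla$; the terms involving derivatives of $Y$ and $Z$ cancel and one is left with
\[
(\onabla_X\og)(Y,Z)=(\nabla_X g)(\oX\circ Y,\oX\circ Z).
\]
Applying~\eqref{nablafromg} for the original pair $(\nabla,g)$ and using $X\circ(\oX\circ W)=\oX\circ(X\circ W)$, one sees that claim (iii) is equivalent to the identity
\begin{align*}
&d\theta\big(\oX\circ(X\circ Y),\oX\circ Z\big)+d\theta\big(\oX\circ(X\circ Z),\oX\circ Y\big)\\
&\qquad=d\otheta(X\circ Y,Z)+d\otheta(X\circ Z,Y),\qquad X,Y,Z\in\cT_M.
\end{align*}

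This identity is where the hypotheses on $\oX$ and the F-manifold axioms are really used, and proving it is the step I expect to be the main obstacle. I plan to verify it by a direct computation in $\nabla$-flat coordinates: there $e$ and $\oX$ have constant components, $d\otheta_{ij}$ is obtained explicitly by differentiating $\otheta_i=g_{ib}\,c^b_{pq}\oX^p\oX^q$, and condition~\eqref{nablafromg} for $(\nabla,g)$ reads $\partial_k g_{ij}=\tfrac12 c^l_{ki}d\theta_{lj}+\tfrac12 c^l_{kj}d\theta_{li}$. Substituting the latter into the expression for $d\otheta$ and simplifying, the needed cancellations should follow from the associativity of $\circ$ and the Hertling--Manin relation~\eqref{HMeq1} (equivalently, the total symmetry of $\partial_i c^j_{kl}$ in $\nabla$-flat coordinates), in exactly the spirit of the treatment of the terms $C$ and $D$ in the proof of Theorem~\ref{thm1}. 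With this identity in hand, claims (i)--(iii) are all established and the theorem follows as explained in the first paragraph.
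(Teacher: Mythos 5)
Your overall architecture is sound and essentially matches the paper's: reduce to checking that $(M,\circ,\onabla,e)$ is a flat F-manifold, that $\og$ is invariant, and that the pair $(\onabla,\og)$ satisfies~\eqref{nablafromg}; your claims (i) and (ii) are correct and proved, your computation $(\onabla_X\og)(Y,Z)=(\nabla_X g)(\oX\circ Y,\oX\circ Z)$ (which uses $\nabla\oX=0$ to cancel the $(\nabla c)(\oX,\cdot)$ terms) is the same as the paper's, and the appeal to Theorem~\ref{propinv2} together with the uniqueness in Part~1 of Theorem~\ref{thm1} correctly packages the curvature and Killing conditions. However, there is a genuine gap at the crux of the argument: the identity
\begin{align*}
d\theta\bigl(\oX\circ X\circ Y,\oX\circ Z\bigr)+d\theta\bigl(\oX\circ X\circ Z,\oX\circ Y\bigr)
= d\otheta(X\circ Y,Z)+d\otheta(X\circ Z,Y)
\end{align*}
is exactly the content of the theorem that remains to be proved, and you do not prove it --- you only announce a plan (``I plan to verify it by a direct computation in $\nabla$-flat coordinates\dots the needed cancellations should follow from\dots''). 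Since every other step is either routine or borrowed from earlier results, deferring this identity means the proof is not complete.

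The paper closes this step without any coordinate computation, and the trick is worth recording: since $\onabla$ is torsionless and $\onabla e=0$, one has $d\otheta(W,Z)=(\onabla_W\og)(e,Z)-(\onabla_Z\og)(e,W)$, so the right-hand side of the target identity can itself be rewritten via your formula $(\onabla_Y\og)(W,Z)=(\nabla_Y g)(\oX\circ W,\oX\circ Z)$ and then via~\eqref{nablafromg} for the original pair $(\nabla,g)$. Everything is then expressed in terms of $d\theta$ alone, and the equality follows from the skewsymmetry of $d\theta$ together with commutativity and associativity of $\circ$ (the terms of the form $d\theta(\,\cdot\,,\oX)$ cancel in pairs); one obtains in fact the clean relation $d\otheta(W,Z)=d\theta(\oX\circ W,\oX\circ Z)$, from which your identity is immediate. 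If you insist on your flat-coordinate route, note also that what you would use there is not the Hertling--Manin relation~\eqref{HMeq1} but the stronger total symmetry of $\partial_i c^j_{kl}$ in its lower indices, which holds in $\nabla$-flat coordinates by compatibility of $\nabla$ with $\circ$; the two are not equivalent, contrary to your parenthetical remark. Either way, the identity must actually be established for the proof to stand.
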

\begin{proof} 
To prove invariance of the metric $\og$, we compute
\[
\og(Y\circ Z,W)=g(\oX\circ Y\circ Z,\oX\circ W)=g(\oX\circ Y,\oX\circ W\circ Z)=\og(Y,W\circ Z),
\]
as required.

To prove that $\onabla$ is the flat connection associated to $\og$ in the sense of Theorem~\ref{thm1}, we have to prove that
\begin{equation}\label{eq:onabla-og condition}
(\onabla_Y\og)(W,Z)=\frac{1}{2}d\otheta(Y\circ W,Z)+\frac{1}{2}d\otheta(Y\circ Z,W).
\end{equation}
Note that
$$
d\otheta(W,Z)=W(\otheta(Z))-Z(\otheta(W))-\otheta([W,Z])=(\onabla_W\og)(e,Z)-(\onabla_Z\og)(e,W).
$$
Therefore, condition~\eqref{eq:onabla-og condition} is equivalent to
\begin{gather}\label{eq:onabla and og}
(\onabla_Y\og)(W,Z)=\frac{1}{2}(\onabla_{Y\circ W}\og)(e,Z)-\frac{1}{2}(\onabla_Z\og)(e,Y\circ W)+\frac{1}{2}(\onabla_{Y\circ Z}\og)(e,W)-\frac{1}{2}(\onabla_W\og)(e,Y\circ Z).
\end{gather}
To prove this we compute
\begin{align*}
(\onabla_Y\og)(W,Z)=&Y\left(g\left(\oX\circ W,\oX\circ Z\right)\right)-\og\left(\onabla_Y W,Z\right)-\og\left(W,\onabla_Y Z\right)=\\
=&(\nabla_Y g)\left(\oX\circ W,\oX\circ Z\right)+g\left((\nabla_Y c)(\oX,W),\oX\circ Z\right)+g\left(\oX\circ \nabla_Y W,\oX\circ Z\right)\\
&+g\left(\oX\circ W,(\nabla_Y c)(\oX,Z)\right)+g\left(\oX\circ W,\oX\circ \nabla_Y Z\right)-\og\left(\onabla_Y W,Z\right)-\og\left(W,\onabla_Y Z\right)=\\
=&(\nabla_Y g)\left(\oX\circ W,\oX\circ Z\right)+g\left((\nabla_Y c)(\oX,W),\oX\circ Z\right)+g\left(\oX\circ \nabla_Y W,\oX\circ Z\right)\\
&+g\left(\oX\circ W,(\nabla_Y c)(\oX,Z)\right)+g\left(\oX\circ W,\oX\circ \nabla_Y Z\right)-\og\left(\nabla_Y W,Z\right)-\og\left(W,\nabla_Y Z\right)\\
&-\og\left(\oX^{-1}\circ(\nabla_Y c)(\oX,W),Z\right)-\og\left(W,\oX^{-1}\circ(\nabla_Y c)(\oX,Z)\right)=\\
=&(\nabla_Y g)\left(\oX\circ W,\oX\circ Z\right).
\end{align*}
Therefore, equation~\eqref{eq:onabla and og} is equivalent to 
\begin{align}
(\nabla_Y g)(\oX\circ W,\oX\circ Z)=&\frac{1}{2}(\nabla_{Y\circ W}g)(\oX,\oX\circ Z)-\frac{1}{2}(\nabla_Z g)(\oX,\oX\circ Y\circ W)\label{eq:onabla and og,2}\\
&+\frac{1}{2}(\nabla_{Y\circ Z}g)(\oX,\oX\circ W)-\frac{1}{2}(\nabla_W g)(\oX,\oX\circ Y\circ Z).\notag
\end{align}
Using that $(\nabla_Yg)(W,Z)=\frac{1}{2}d\theta(Y\circ W,Z)+\frac{1}{2}d\theta(Y\circ Z,W)$, for the left-hand side of~\eqref{eq:onabla and og,2} we obtain
\[
\frac{1}{2}d\theta(Y\circ \oX\circ W,\oX\circ Z)+\frac{1}{2}d\theta(Y\circ\oX\circ Z,\oX\circ W),
\]
while for the right-hand side of~\eqref{eq:onabla and og,2} we get
\begin{eqnarray*}
&&\frac{1}{4}d\theta(Y\circ W\circ\oX,\oX\circ Z)+\frac{1}{4}d\theta(Y\circ W\circ\oX\circ Z,\oX)-\frac{1}{4}d\theta(Z\circ\oX,\oX\circ Y\circ W)\\
&&-\frac{1}{4}d\theta(Z\circ\oX\circ Y\circ W,\oX)+\frac{1}{4}d\theta(Y\circ Z\circ\oX,\oX\circ W)+\frac{1}{4}d\theta(Y\circ Z\circ\oX\circ W,\oX)\\
&&-\frac{1}{4}d\theta(W\circ\oX,\oX\circ Y\circ Z)-\frac{1}{4}d\theta(W\circ\oX\circ Y\circ Z,\oX)=\\
&=&\frac{1}{2}d\theta(Y\circ \oX\circ W,\oX\circ Z)+\frac{1}{2}d\theta(Y\circ\oX\circ Z,\oX\circ W),
\end{eqnarray*}
as required.

The fact that the Riemann tensor of $\og$ satisfies condition~\eqref{eqRcRcRc=0} follows now from the fact that~$\onabla$ is flat and Proposition~\ref{proposition:R and tR identity}.

The property $\cL_e\og=0$ follows from Lemma~\ref{propinv1}.
\end{proof}

\begin{remark}
In the semisimple case, in canonical coordinates the Lam\'e coefficients $\oH_i$ of the metric $\og$ are related to the Lam\'e coefficients $H_i$ of the metric $g$ by 
\[
\oH_i=H_i\oX^i,\quad i=1,\ldots,n,
\]
where $\oX$ satisfies the condition
\[
\d_j\oX^i=\Gamma^i_{ij}(\oX^j-\oX^i)=\beta_{ji}\f{H_j}{H_i}(\oX^j-\oX^i),\quad i\ne j.
\]
Using this fact we get $\beta_{ij}=\overline{\beta}_{ij}$. In classical differential geometry two diagonal metrics with the same rotation coefficients are said to be \emph{Combescure equivalent}.
\end{remark}

If the Legendre vector field is homogeneous, then the associated transformation preserves the homogeneity property.

\begin{theorem} 
Let $(M,\circ,g,e,E)$ be a homogeneous Riemannian F-manifold with Killing unit vector field. If the Legendre vector field $\oX$ is flat and homogeneous, i.e. $\cL_E \oX=\od\cdot \oX$ for some constant $\od$, then the data $(M,\circ,\og,e,E)$ define a new homogeneous Riemannian F-manifold with Killing unit vector field.
\end{theorem}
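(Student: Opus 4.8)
The plan is to deduce this directly from the preceding theorem together with one short homogeneity computation. By that theorem, the data $(M,\circ,\og,e)$ already constitute a (pseudo-)Riemannian F-manifold with Killing unit vector field whose associated flat structure is $\onabla$; in particular $\og$ is invariant, its Riemann tensor satisfies~\eqref{eqRcRcRc=0}, and $\cL_e\og=0$ (this last via Lemma~\ref{propinv1}). Hence the only new thing to verify is that $(M,\circ,\og,e,E)$ meets the two extra requirements in the definition of a homogeneous (pseudo-)Riemannian F-manifold, namely $\cL_E\circ=\circ$ and $\cL_E\og=D'\og$ for some constant $D'$. The first holds by hypothesis, since the product is unchanged and $(M,\circ,g,e,E)$ is assumed homogeneous. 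So the whole proof reduces to computing $\cL_E\og$.

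First I would record two Leibniz-type identities. From $\cL_E g=Dg$ one gets, for all $A,B\in\cT_M$,
\[
E(g(A,B))=D\,g(A,B)+g(\cL_E A,B)+g(A,\cL_E B),
\]
and from $\cL_E\circ=\circ$, read as an identity of $(1,2)$-tensors, together with the hypothesis $\cL_E\oX=\od\cdot\oX$, one gets
\[
\cL_E(\oX\circ Y)=\oX\circ Y+(\cL_E\oX)\circ Y+\oX\circ(\cL_E Y)=(\od+1)\,\oX\circ Y+\oX\circ\cL_E Y,\quad Y\in\cT_M.
\]
Then I would substitute $A=\oX\circ W$ and $B=\oX\circ Z$ into the first identity and use the second to expand $\cL_E(\oX\circ W)$ and $\cL_E(\oX\circ Z)$; each of these two expansions contributes a term $(\od+1)\og(W,Z)$ together with $\og(\cL_E W,Z)$, respectively $\og(W,\cL_E Z)$. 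Collecting terms yields
\[
E(\og(W,Z))=(D+2\od+2)\,\og(W,Z)+\og(\cL_E W,Z)+\og(W,\cL_E Z),
\]
so that $(\cL_E\og)(W,Z)=(D+2\od+2)\,\og(W,Z)$, i.e. $\cL_E\og=D'\og$ with the constant $D'=D+2\od+2$. Combined with $\cL_E\circ=\circ$ and the previous theorem, this establishes that $(M,\circ,\og,e,E)$ is a homogeneous (pseudo-)Riemannian F-manifold with Killing unit vector field.

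There is essentially no obstacle here: all the substantive content — invariance of $\og$, the curvature identity, $\cL_e\og=0$, and the identification of $\onabla$ as the flat structure of $\og$ — was already carried out in the proof of the previous theorem, and what remains is only the bookkeeping of the Euler grading, a two-line Leibniz computation. The one point that deserves a moment's care is using $\cL_E\circ=\circ$ correctly as a $(1,2)$-tensor identity when commuting $\cL_E$ past the product; once that is set up the computation collapses. As a consistency check one may note that in the semisimple case this gives the new homogeneity degree $d'=d+\od+1$ for the Lam\'e coefficients $\oH_i=H_i\oX^i$, matching $D'=2d'+2$.
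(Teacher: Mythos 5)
Your proposal is correct and follows exactly the paper's route: invoke the previous theorem for everything except homogeneity of $\og$, then verify $\cL_E\og=(D+2\od+2)\og$ by the Leibniz computation using $\cL_E g=Dg$, $\cL_E\circ=\circ$, and $\cL_E\oX=\od\cdot\oX$. The paper states this last step without detail ("this follows immediately from the hypothesis"), and your expansion of it, including the constant $D+2\od+2$ and the semisimple consistency check $d'=d+\od+1$, is accurate.
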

\begin{proof}
Due to the previous theorem we only need to check homogeneity of the new metric~$\og$, but this follows immediately from the hypothesis and we have $\cL_E\og=(D+2\od+2)\og$.
\end{proof}
 
Below we provide an interpretation of the results from Sections~\ref{q=0} and~\ref{exactpencil} in terms of Legendre transformations. 

\subsubsection{The flat F-manifolds from Section~\ref{q=0}}

Let us consider the  flat structure $(\nabla,\circ,e)$ associated with the first metric obtained in Section~\ref{q=0}. In canonical coordinates it is given by the Christoffel symbols
\begin{align*}
&\Gamma^1_{12} = \f{b}{(au^1-au^3+bu^1-bu^2)},&&\Gamma^2_{12}= -\f{(a+b)}{(au^1-au^3+bu^1-bu^2)},\\
&\Gamma^1_{13}= \f{a}{(au^1-au^3+bu^1-bu^2)},&& \Gamma^3_{13} = -\f{(a+b)}{(au^1-au^3+bu^1-bu^2)},\\
&\Gamma^2_{23} = \f{a}{(au^1-au^3+bu^1-bu^2)},&&\Gamma^3_{23}= \f{b}{(au^1-au^3+bu^1-bu^2)},
\end{align*}
using the conditions
\begin{equation}\label{addcond}
\Gamma^i_{jj}=-\Gamma^i_{ij}=-\Gamma^i_{ji}\quad(i\ne j),\qquad \Gamma^i_{jk}=0\quad(i\ne j\ne k\ne i),\qquad
\Gamma^i_{ii}=-\sum_{j\ne i}\Gamma^i_{ij}.
\end{equation}
It is straightforward to check that vector fields that are flat with respect to $\nabla$ are linear combinations of the unit vector field $\oX_{(1)}:=e$, of the vector field $\oX_{(2)}$ with components
\[
\oX_{(2)}^1:=u^2-u^3,\qquad \oX_{(2)}^2:=\f{(a+b)}{b}(u^1-u^3),\qquad \oX_{(2)}^3:=\f{(a+b)}{a}(u^2-u^1),
\]
and of the vector field $\oX_{(3)}$ with components
\begin{eqnarray*}
 \oX_{(3)}^1&:=&-(a+b)(u^1)^2-2(au^3+bu^2)u^1-a(u^3)^2-b(u^2)^2,\\
 \oX_{(3)}^2&:=&(a+b)(u^1)^2-2(a+b)u^1u^2+au^3(2u^2-u^3)+b(u^2)^2,\\
 \oX_{(3)}^3&:=&(a+b)(u^1)^2-2u^3(a+b)u^1+a(u^3)^2-bu^2(u^2-2u^3).
\end{eqnarray*}
Applying the Legendre transformations generated by the vector fields $\oX_{(2)}$ and $\oX_{(3)}$ to the first metric we get the second and the third metric of Section~\ref{q=0}. 
 
\subsubsection{The F-manifolds from Section~\ref{exactpencil}}

Let us consider the  flat structure $(\nabla,\circ,e)$ associated with the first metric of Section \ref{exactpencil}. In canonical coordinates it is given by the Christoffel symbols
\[
\Gamma^i_{ij} =\f{1}{2(u^j-u^i)},\quad i\ne j, 
\]
using conditions \eqref{addcond}. It is straightforward to check that vector fields that are flat with respect to~$\nabla$ are linear combinations of the unit vector field $\oX_{(1)}:=e$, of the vector field $\oX_{(2)}$ with components
\begin{eqnarray*}
\oX_{(2)}^1&:=&\f{P_{-\f{1}{2}}^1\left(\f{2u^1-u^3-u^2}{u^2-u^3}\right)}{\sqrt{(u^1-u^2)(u^1-u^3)(u^2-u^3)}},\\
\oX_{(2)}^2&:=&-\f{1}{2}\f{P_{-\f{1}{2}}^1\left(\f{2u^1-u^3-u^2}{u^2-u^3}\right)+P_{\f{1}{2}}^1\left(\f{2u^1-u^3-u^2}{u^2-u^3}\right)}
{\sqrt{(u^1-u^2)(u^1-u^3)(u^2-u^3)}},\\
\oX_{(2)}^3&:=&-\f{1}{2}\f{P_{-\f{1}{2}}^1\left(\f{2u^1-u^3-u^2}{u^2-u^3}\right)-P_{\f{1}{2}}^1\left(\f{2u^1-u^3-u^2}{u^2-u^3}\right)}{\sqrt{(u^1-u^2)(u^1-u^3)(u^2-u^3)}},
\end{eqnarray*}
and of the vector field $\oX_{(3)}$ with components
\begin{eqnarray*}
\oX_{(3)}^1&:=&\f{Q_{-\f{1}{2}}^1\left(\f{2u^1-u^3-u^2}{u^2-u^3}\right)}{\sqrt{(u^1-u^2)(u^1-u^3)(u^2-u^3)}},\\
\oX_{(3)}^2&:=&-\f{1}{2}\f{Q_{-\f{1}{2}}^1\left(\f{2u^1-u^3-u^2}{u^2-u^3}\right)+Q_{\f{1}{2}}^1\left(\f{2u^1-u^3-u^2}{u^2-u^3}\right)}
{\sqrt{(u^1-u^2)(u^1-u^3)(u^2-u^3)}},\\
\oX_{(3)}^3&:=&-\f{1}{2}\f{Q_{-\f{1}{2}}^1\left(\f{2u^1-u^3-u^2}{u^2-u^3}\right)-Q_{\f{1}{2}}^1\left(\f{2u^1-u^3-u^2}{u^2-u^3}\right)}{\sqrt{(u^1-u^2)(u^1-u^3)(u^2-u^3)}}.
\end{eqnarray*}
Applying the Legendre transformations generated  by the linear combinations of the vector fields $\oX_{(2)}$ and $\oX_{(3)}$ to the first metric we get the metrics corresponding to the eigenvalue $-\f{1}{2}$.
\begin{remark}
Some computations suggest that, even in the case $n>3$, non-Egorov homogeneous exact flat pencils of metrics are related to the flat pencil \eqref{AF} by a Legendre transformation.
\end{remark}  
 
\section{Appendix}
In dimension $2$ regular non semisimple homogeneous flat structures are given
 by the two-parameter family considered in the Example 1.4.
 In this Appendix we list the associated vector potentials. We have 5 different cases depending on the values of the parameter $a$.
\newline
\newline
{\bf Case I: $a\ne -2,-1,0,1$}. The flat coordinates are given by
\[u=x-\f{b}{a}\,y,\qquad v=\f{y^{a+1}}{a+1}.\]
In these coordinates we have
\[e=\partial_u,\qquad E=u\,\partial_u+(a+1)\,v\,\partial_v\]
and the vector potential reads
\[F^1=\f{a^2(a-1)u^2+b^2(a+1)^{\f{2}{a+1}}v^{\f{2}{a+1}}}{2(a-1)a^2},\qquad
F^2=\f{a(a+2)uv+2b(a+1)^{\f{a+2}{a+1}}v^{\f{a+2}{a+1}}}{(a+2)a}.\]
\newline
\newline
{\bf Case II: $a=-2$}. The flat coordinates are given by
\[u=x+\f{1}{2}b\,y,\qquad v=-\f{1}{y}.\]
In these coordinates we have
\[e=\partial_u,\qquad E=u\,\partial_u-v\,\partial_v\]
and the vector potential reads
\[F^1=\f{1}{2}u^2-\f{1}{24}\f{b^2}{v^2},\qquad
F^2=uv+b\ln{v}.\]
\newline
\newline
{\bf Case III: $a=-1$}. The flat coordinates are given by
\[u=x+b\,y,\qquad v=\ln{y}.\]
In these coordinates we have
\[e=\partial_u,\qquad E=u\,\partial_u+\partial_v\]
and the vector potential reads
\[F^1=\f{1}{2}u^2-\f{1}{4}b^2e^{2v},\qquad
F^2=uv-2b\,e^v.\]
\newline
\newline
{\bf Case IV: $a=0$}. The flat coordinates are given by
\[u=x+b\,y\ln{y},\qquad v=y.\]
In these coordinates we have
\[e=\partial_u,\qquad E=(u+bv)\partial_u+v\,\partial_v\]
and the vector potential reads
\[F^1=\f{1}{2}u^2-\f{1}{2}b^2\,v^2(\ln{v})^2+\f{1}{2}b^2\,v^2\ln{v}-\f{3}{4}b^2\,v^2,\qquad F^2=-b\,v^2\ln{v}+\f{1}{2}b\,v^2+uv.\]
\newline
\newline
{\bf Case V: $a=1$}. The flat coordinates are given by
\[u=x-b\,y,\qquad v=\f{y^2}{2}.\]
In these coordinates we have
\[e=\partial_u,\qquad E=u\,\partial_u+2\,v\,\partial_v\]
and the vector potential reads
\[F^1=\f{1}{2}u^2-\f{1}{2}b^2\,v\ln{v}+\f{1}{2}b^2 v,\qquad
F^2=\f{4}{3}b\sqrt{2}\,v^{\f{3}{2}}+uv.\]

\end{document}